\theoremstyle{plain}
\newtheorem{theorem}{Theorem}[section]
\newtheorem{lemma}[theorem]{Lemma}
\newtheorem{corollary}[theorem]{Corollary}
\newtheorem{proposition}[theorem]{Proposition}
\theoremstyle{definition}
\newtheorem{definition}{Definition}[section]
\newtheorem{example}{Example}[section]
\theoremstyle{remark}
\newtheorem{remark}{Remark}[section]
\DeclareMathOperator{\diam}{diam}
\newcommand{\skp}[2]{\left<#1,#2\right>}
\newcommand{\strongperp}{\mathrel{\perp\!\!\!\!\perp}^S}
\newcommand{\strongp}{\mathrel{\perp}^S}
\numberwithin{equation}{section} 
\begin{document}
\title[Isolated vertices and diameter of the orthograph]{Isolated vertices and diameter of the $BJ$-orthograph in $C^*$-algebras} 

\author{Dragoljub J. Ke\v cki\' c}
\address{University of Belgrade\\ Faculty of Mathematics\\ Student\/ski trg 16-18\\ 11000 Beograd\\ Serbia}

\email{keckic@matf.bg.ac.rs}

\author{Srdjan Stefanovi\' c}
\address{University of Belgrade\\ Faculty of Mathematics\\ Student\/ski trg 16-18\\ 11000 Beograd\\ Serbia}

\email{srdjan.stefanovic@matf.bg.ac.rs}

\thanks{The research was supported by the Serbian Ministry of Education, Science and Technological Development through Faculty of Mathematics, University of Belgrade.}

\begin{abstract} We give necessary and sufficient condition that an element of an arbitrary $C^{*}$-algebra is an isolated vertex of the orthograph related to the mutual strong Birkhoff-James orthogonality. Also, we prove that for all $C^{*}$-algebras except $\mathbb{C},\mathbb{C}\oplus\mathbb{C}$ and $M_2(\mathbb{C})$ all non isolated points make a single connected component of the orthograph which diameter is less than or equal to $4$, i.e.\ any two non isolated points can be connected by a path with at most $4$ edges. Some related results are given.
\end{abstract}


\subjclass[2010]{Primary: 46L05, 46L08, Secondary: 05C12, 46B20}

\keywords{Birkhoff-James orthogonality, $C^*$-algebra, orthograph.}

\maketitle

\section{Introduction}

Birkhoff-James orthogonality in Banach spaces has been studied for many years. It is defined as follows.

\begin{definition} Let $X$ be a Banach space, and let $x$, $y\in X$. We say that $x$ is Birkhoff-James orthogonal to $y$, and denote $x\perp_{BJ} y$, or simply $x\perp y$, if for any $\lambda\in \mathbb C$ there holds
\begin{equation}\label{BJord}
\|x+\lambda y\|\ge\|x\|.
\end{equation}

\end{definition}

It is easy to see that (\ref{BJord}) implies orthogonality in the usual sense, if $X$ is equipped with an inner product generating the norm. If, however, the norm on $X$ does not satisfy the parallelogram rule, i.e.\ $X$ is not a Hilbert space, Birkhoff-James orthogonality has some deficiencies. For instance, it is neither symmetric nor additive. Birkhoff-James orthogonality we shall abbreviate to $BJ$-orthogonality in further.

For further details on $BJ$-orthogonality, the reader is referred to a recent survey \cite{ArambasicSurvey} and references therein.

In this note we are interested in $C^*$-algebras. Besides usual $BJ$-orthogonality, there is also a notion of strong $BJ$-orthogonality.

\begin{definition} Let $A$ be a $C^*$-algebra and let $a$, $b\in A$.

a) We say that $a$ is strong $BJ$-orthogonal to $b$, and denote $a\strongp b$ if for any $c\in A$ there holds
\begin{equation}\label{BJstrong}
\|a+bc\|\ge\|a\|.
\end{equation}

b) We say that $a$ and $b$ are mutually strong $BJ$-orthogonal to each other, and denote $a\strongperp b$ if $a\strongp b$ and $b\strongp a$.
\end{definition}

\begin{remark}
    The part a) of the previous definition is from \cite{ArambasicAFA2014}, and part b) from \cite{ArambasicBJMA2020}.
\end{remark}

\begin{remark} Obviously, the previously defined notion does not depend on scalar multiple.
\end{remark}

\begin{remark}
    The notion of strong $BJ$-orthogonality actually comes from the investigation of Hilbert modules, i.e.\ modules over some $C^*$-algebra. It is usual to consider left modules, where scalars are written to the right of a vector. Any $C^*$ algebra can be regarded as a Hilbert module overitself. Therefore, (\ref{BJstrong}) is a special case of $BJ$-orthogonality on Hilbert modules.
\end{remark}

Recently, in \cite{ArambasicBJMA2020} the orthograph related to $BJ$-orthogonality is introduced.

\begin{definition} Let $A$ be a $C^*$-algebra, and let $S$ be the corresponding projective space, i.e.\ $S=(A\setminus\{0\})/a\sim \lambda a$, where $\lambda$ runs through $\mathbb C$.

The orthograph $\Gamma(A)$ related to the mutual strong $BJ$-orthogonality is the graph with $S$ as a set of vertices and with edges consisting of those pairs $(a,b)\in S\times S$ for which $a\strongperp b$.
\end{definition}

Among others, the following results are proved in \cite{ArambasicBJMA2020}:

R1. In unital $C^*$ algebra, any right invertible element is an isolated vertex of the orthograph.

R2. All non right invertible elements form a connected component of the orthograph, in the following $C^*$ algebras: $M_n(\mathbb C)$, $n\ge3$, $B(H)$ for infinite dimensional Hilbert space $H$ and $C(K)$. On $C^*$ algebra $M_2(\mathbb C)$, any non isolated vertex belongs to some two-membered connected component.

R3. The diameter of the orthograph is also investigated. Given two vertices in some connected component, their distance is the minimal number of edges that connect them. The diameter of the orthograph (or its connected component) is the maximal distance of arbitrary two connected vertices. The following answers were given. For $M_3(\mathbb C)$ the diameter is $4$. For $M_n(\mathbb C)$, $n\ge 4$ and $B(H)$ with $H$ infinite dimensional, the diameter is $3$. For $C(K)$ where $K$ does not contain point with countable local bases, the diameter is $2$. Otherwise, the diameter of $C(K)$ is $3$.

In this note we consider the following questions related to previously listed results.

Q1. Whether the converse of R1 is true: "any isolated vertex in a unital $C^*$-algebra is right invertible"? What can be said about nonunital $C^*$ algebras? (This question is posed in \cite[Section 6]{ArambasicBJMA2020}.)

Q2. Can we estimate the diameter of $\Gamma(A)$ in general case?

Q3. Is $M_2(\mathbb C)$ the only $C^*$ algebra where non invertible elements do not make a connected component?

In section \ref{Isolated} we give complete answer to Question 1. In section \ref{NonComTop} we give a brief survey of Akemann's noncommutative topology, which is the main tool in the next section. In section \ref{Diameter} we give affirmative answers to Question 3 and give a sharp estimate of the diameter in general case (Question 2). In section \ref{finite case} we discuss the diameter of finite dimensional $C^*$ algebras. Finally, in section \ref{questions} we raise two questions for future investigation.

It was supposed that the reader is familiar with basic $C^*$ algebra notions and techniques, e.g.\ states, pure states, Kadison transitivity theorem, etc. Advanced techniques, those related to the second dual of a given $C^*$ algebra is presented in more details. For both basic and advance facts on $C^*$-algebras, the reader is reffered to books \cite{Murphy} and \cite{Pedersen}.

\subsection{Reducing to positive elements}

Before we state and prove our results, in next two Lemmata we prove that $BJ$-orthogonality between $a$ and $b$ depends only on the minimal subalgebra containing $a$ and $b$, as well as that it depends only on absolute values $|a^*|=(aa^*)^{1/2}$ and $|b^*|=(bb^*)^{1/2}$.

\begin{lemma}\label{Ambiental}
    Let $A\subseteq B$ be two $C^*$-algebras and let $a$, $b\in A$. Then $a\perp^Sb$ in $A$ iff $a\perp^S b$ in $B$.
\end{lemma}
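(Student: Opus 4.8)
The plan is to reduce the ambient-dependent definition of $\strongp$ to an \emph{intrinsic} condition involving only norms of elements of the $C^*$-subalgebra $C^*(a,b)$ generated by $a$ and $b$, and then to exploit the fact that inclusions of $C^*$-algebras are isometric.

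First, one direction is immediate. Taking $c=0$ in (\ref{BJstrong}) shows that, in any $C^*$-algebra $X$ with $a,b\in X$, the relation $a\strongp b$ is equivalent to $\inf_{c\in X}\|a+bc\|=\|a\|$, i.e.\ to $a$ being ordinarily $BJ$-orthogonal to the closed right ideal $\overline{bX}$ generated by $b$. Since $A\subseteq B$, the infimum over $c\in A$ ranges over a subset of the $c\in B$; hence $a\strongp b$ in $B$ trivially forces $a\strongp b$ in $A$. The content of the lemma is the converse.

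For the converse I would invoke the characterisation that strong orthogonality is equivalent to ordinary $BJ$-orthogonality to a single, explicitly given element (see \cite{ArambasicAFA2014}): namely $a\strongp b$ if and only if $a\perp_{BJ}bb^*a$, that is, $\|a+\lambda bb^*a\|\ge\|a\|$ for every $\lambda\in\mathbb C$. The decisive feature of this reformulation is that the quantifier over algebra elements $c$ has disappeared: the right-hand side refers only to the two fixed elements $a$ and $bb^*a$, both of which lie in $C^*(a,b)\subseteq A\subseteq B$. The conclusion is then automatic, because every inclusion of $C^*$-algebras is isometric: for any $\lambda$ the element $a+\lambda bb^*a$ lies in $A$ and has the same norm computed in $A$ or in $B$. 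Thus $a\perp_{BJ}bb^*a$ holds in $A$ exactly when it holds in $B$, and applying the characterisation in each algebra yields $a\strongp b$ in $A$ iff $a\strongp b$ in $B$.

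I expect the main obstacle to be establishing the characterisation $a\strongp b\iff a\perp_{BJ}bb^*a$ itself, rather than the ambient-independence deduction that follows from it. I would prove it by first using the standard identity $\overline{bA}=\overline{bb^*A}$ for closed right ideals, which shows that $\strongp$ is equivalent to $BJ$-orthogonality of $a$ to the ideal $\overline{bb^*A}$; a best-approximation argument, supported by a suitable state, then collapses orthogonality to the whole ideal down to orthogonality to the single vector $bb^*a$. The delicate points are verifying that the supporting state can be chosen so that $bb^*a$ genuinely witnesses orthogonality to \emph{all} of $bA$, and controlling the passage to the closure. Once this reduction is in hand, ambient independence requires nothing more than isometry of the inclusion.
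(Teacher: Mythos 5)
Your proposal is correct and takes essentially the same route as the paper: both reduce strong $BJ$-orthogonality to the ordinary $BJ$-orthogonality $a\perp bb^*a$ via \cite[Theorem 2.5]{ArambasicAFA2014}, and then observe that this condition involves only norms of linear combinations of the fixed elements $a$ and $bb^*a$, which are unchanged under the (isometric) inclusion $A\subseteq B$. The extra sketch you give for reproving the characterisation is unnecessary, since the paper (like your main argument) simply cites it.
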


\begin{remark}
 In other words, strong $BJ$-orthogonality is an intrinsic property of the elements $a$, $b$ and it does not depend on ambient algebra.
\end{remark}

\begin{proof}
In \cite[Theorem 2.5.]{ArambasicAFA2014} it was proved that in any Hilbert $C^*$-module $a\perp^Sb$ if and only if $a\perp b\skp ba$ -- the basic, i.e.\ non strong $BJ$-orthogonality. In the case when the Hilbert module is algebra itself, the inner product is given by $\skp ba=b^*a$. Therefore
\begin{equation}\label{intrinsic}
a\perp^Sb\quad\mbox{iff}\quad a\perp bb^*a.
\end{equation}
The conclusion follows, because (non strong) $BJ$-orthogonality depends only on linear span of $a$ and $bb^*a$.
\end{proof}

\begin{lemma}\label{modul*}
We have
$a\strongp b$ if and only if $|a^*|\strongp|b^*|$.
\end{lemma}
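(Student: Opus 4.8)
The plan is to reduce the whole statement to the characterization (\ref{intrinsic}) obtained in the proof of Lemma \ref{Ambiental}, namely that for arbitrary $x,y$ one has $x\strongp y$ precisely when $x\perp y\skp yx=x\perp yy^*x$ in the ordinary (non‑strong) sense, i.e.\ when
\[
\|x+\lambda yy^*x\|\ge\|x\|\qquad\text{for every }\lambda\in\C.
\]
Reading this criterion off for the pair $(a,b)$ and for the pair $(|a^*|,|b^*|)$ turns the lemma into the verification that the two resulting families of inequalities are literally the same. I would split the argument into the contribution of the right‑hand element and that of the left‑hand element.

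The right‑hand element is the easy half. Since $|b^*|=(bb^*)^{1/2}$ is positive we have $|b^*|\,(|b^*|)^*=|b^*|^2=bb^*$, so the operator $yy^*$ featuring in (\ref{intrinsic}) is one and the same element $bb^*$ whether we take $y=b$ or $y=|b^*|$. Thus the criterion only ever depends on $b$ through $bb^*$, and replacing $b$ by $|b^*|$ leaves it unchanged.

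For the left‑hand element I would invoke the elementary $C^*$-identity $\|xa\|=\|x\,|a^*|\|$, valid for all $x,a$, which follows from
\[
\|xa\|^2=\|x\,aa^*x^*\|=\|x\,|a^*|^2x^*\|=\|x\,|a^*|\|^2,
\]
using $aa^*=|a^*|^2$ and $|a^*|^*=|a^*|$. Applying this with $x=1+\lambda bb^*$ (formed in the unitization, which is harmless by Lemma \ref{Ambiental}) to $a+\lambda bb^*a=(1+\lambda bb^*)a$ yields $\|a+\lambda bb^*a\|=\||a^*|+\lambda bb^*|a^*|\|$, while trivially $\|a\|=\||a^*|\|$. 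Consequently the inequality $\|a+\lambda bb^*a\|\ge\|a\|$ holds for all $\lambda$ if and only if $\||a^*|+\lambda bb^*|a^*|\|\ge\||a^*|\|$ holds for all $\lambda$; by (\ref{intrinsic}) these say $a\strongp b$ and $|a^*|\strongp b$ respectively. Combining with the previous paragraph gives $a\strongp b\Longleftrightarrow|a^*|\strongp b\Longleftrightarrow|a^*|\strongp|b^*|$, which is the claim.

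The only delicate point is the use of the unit in the factorization $(1+\lambda bb^*)$, and I expect this to be the main (minor) obstacle rather than the algebra itself. It can be sidestepped entirely: expanding the products shows that every occurrence of $a$ in $(a+\lambda bb^*a)(a+\lambda bb^*a)^*$ pairs up as $aa^*$, so after the substitution $aa^*=|a^*|^2$ this element equals $(|a^*|+\lambda bb^*|a^*|)(|a^*|+\lambda bb^*|a^*|)^*$, and the two norms coincide without ever isolating a unit.
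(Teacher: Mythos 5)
Your proof is correct, and for the harder half it takes a genuinely different route from the paper. The treatment of the right--hand element is identical in both: since $|b^*|\,|b^*|^*=bb^*$, the criterion (\ref{intrinsic}) sees $b$ only through $bb^*$, so $x\strongp b$ iff $x\strongp|b^*|$ for any $x$. For the left--hand element, however, the paper works in the enveloping algebra $A''$: it takes the polar decomposition $a=|a^*|v$, $|a^*|=av^*$ with $v$ a partial isometry in $A''$, and multiplies the defining inequalities of $\strongp$ (quantified over all $c\in A$) by $v$ and $v^*$ to pass between $a$ and $|a^*|$. You instead stay at the level of the scalar criterion from (\ref{intrinsic}) and observe that the two families of inequalities are \emph{literally equal}, via the $C^*$-identity $\|xa\|=\|x\,|a^*|\|$, or equivalently via the unit-free expansion showing $(a+\lambda bb^*a)(a+\lambda bb^*a)^*=(|a^*|+\lambda bb^*|a^*|)(|a^*|+\lambda bb^*|a^*|)^*$. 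Your argument is more elementary: it never leaves $A$, needs no partial isometries and no second dual, and yields the stronger conclusion that the norms $\|a+\lambda bb^*a\|$ and $\||a^*|+\lambda bb^*|a^*|\|$ coincide exactly, rather than being compared through a chain of inequalities. One small remark: your appeal to Lemma \ref{Ambiental} to justify forming $1+\lambda bb^*$ in the unitization is not quite the right citation --- what is actually needed there is only that the inclusion $A\subseteq\tilde A$ is isometric, which holds for any $C^*$-subalgebra --- but since you also give the unit-free expansion, nothing in the proof depends on this.
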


\begin{proof}
Indeed, since $|b^*||b^*|^*=(bb^*)^{1/2}(bb^*)^{1/2}=bb^*$ we immediately have from (\ref{intrinsic}) that $a\perp^S b$ iff $a\perp^S|b^*|$.

For the first argument, let us consider $a$, $b\in A$ and the enveloping $W^*$-algebra of $A$ denoted by $A''$. There is a partial isometry $v\in A''$ such that $a=|a^*|v$ and also $|a^*|=av^*$. Suppose $|a^*|\strongp b$. Then for all $c\in A\subseteq A''$ we have
$$\|a\|=\||a^*|v\|\le\||a^*|\|\le \||a^*|+bcv^*\|=\|(a+bc)v^*\|\le\|a+bc\|,$$
from which we conclude $a\strongp b$. Similarly, if $a\strongp b$ and $c\in A$ arbitrary, then
$$\||a^*|\|=\|av^*\|\le\|a\|\le \|a+bcv\|=\|(|a^*|+bc)v\|\le\||a^*|+bc\|,$$
from which we conclude $a\strongp|b^*|$. Thus, $a\strongp b$ iff $|a^*|\strongp b$.
\end{proof}

\section{Isolated vertices}\label{Isolated}

In this section we characterize isolated vertices of the orthograph $\Gamma(A)$ in both unital and nonunital $C^*$-algebras $A$.

We need the notion of approximate invertibility, introduced in \cite{esmeral2021approximately} in the following way

\begin{definition} Let $A$ be a $C^*$-algebra and let $a\in A$. We say that $a$ is approximately right invertible if there is a net $\{b_i\}_{i\in I}$ such that $bb_i$ is approximate unit for $A$.
\end{definition}

Some results given in \cite{esmeral2021approximately} are stated in the following two propositions.

\begin{proposition}\label{Esm1} Let $A$ be a unital $C^*$-algebra, and let $a\in A$. Then $a$ is right invertible if and only if $a$ is approximately right invertible.
\end{proposition}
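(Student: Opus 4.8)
The plan is to treat the two implications separately, with essentially all the content residing in the ``if'' direction, which I would reduce to the standard fact that a sufficiently small perturbation of the unit is invertible.

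The ``only if'' direction I would dispose of immediately. If $ab=1$ for some $b\in A$, then the constant net $b_i:=b$ gives $ab_i=1$ for every $i$, and a net constantly equal to the unit is trivially an approximate unit for $A$; hence $a$ is approximately right invertible. For the converse, suppose $\{b_i\}_{i\in I}$ is a net such that $\{ab_i\}$ is an approximate unit for $A$. The key observation I would invoke is that, because $A$ is unital, every approximate unit converges in norm to $1$: evaluating the defining relation $\|u_i x-x\|\to 0$ at $x=1$ forces $\|ab_i-1\|\to 0$. Consequently there is an index $i_0$ with $\|ab_{i_0}-1\|<1$, and any element lying within distance $1$ of the unit is invertible, its inverse being furnished by the Neumann series $\sum_{n\ge 0}(1-ab_{i_0})^n$. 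Thus $ab_{i_0}$ is invertible.

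It then remains to exhibit a right inverse of $a$ explicitly. Setting $c:=b_{i_0}(ab_{i_0})^{-1}$ yields $ac=(ab_{i_0})(ab_{i_0})^{-1}=1$, so $a$ is right invertible, which completes the argument.

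Since none of the individual steps presents a genuine difficulty, the only point demanding care is the reduction itself, namely the recognition that in a unital $C^*$-algebra an approximate unit is compelled to converge in norm to the identity; once this is in hand, the conclusion follows from the openness of the group of invertible elements. I expect this convergence observation, rather than any estimate, to be the crux of the proof.
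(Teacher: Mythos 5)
Your proof is correct. The paper itself contains no argument for this proposition --- it simply defers to \cite[Proposition 2.7]{esmeral2021approximately} and the paragraph following it --- so your self-contained proof is, if anything, more complete than what appears in the text. The route you take is the natural (and essentially the cited reference's) one: the ``only if'' direction via the constant net; for the ``if'' direction, the observation that in a unital algebra any approximate unit must converge in norm to $1$ (evaluate the defining relation at $x=1$), so that some $ab_{i_0}$ satisfies $\|ab_{i_0}-1\|<1$ and is therefore invertible by the Neumann series, after which $c=b_{i_0}(ab_{i_0})^{-1}$ is an explicit right inverse of $a$. No step is missing: the convergence observation you flag as the crux is exactly what makes the reduction to openness of the invertible group legitimate, and the algebraic verification $ac=(ab_{i_0})(ab_{i_0})^{-1}=1$ is sound.
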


\begin{proof}
    The proof is given in \cite[Proposition 2.7]{esmeral2021approximately} and the paragraph immediately after it.
\end{proof}

\begin{proposition}\label{Esm2} Let $A$ be (possibly nonunital) $C^*$-algebra, and let $a\in A$. The following are equivalent:

($i$) $a$ is approximately right invertible;

($ii$) $a$ does not belong to any right modular ideal;

($iii$) for any pure state $\rho$, we have $\rho(aa^*)>0$.
\end{proposition}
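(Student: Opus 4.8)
The plan is to establish the cycle $(i)\Rightarrow(ii)\Rightarrow(iii)\Rightarrow(i)$, after reducing every statement to the positive element $h=aa^{*}$ and the closed right ideal $\overline{aA}$ it generates. First I would record two preliminary facts. On the one hand, $\overline{aA}=\overline{aa^{*}A}$: the inclusion $aa^{*}A\subseteq aA$ is clear, while the functions $f_{\e}(t)=t/(t+\e)$ give $f_{\e}(aa^{*})a\in aa^{*}A$ and, using the intertwining identity $a^{*}k(aa^{*})=k(a^{*}a)a^{*}$, one finds $\|f_{\e}(aa^{*})a-a\|^{2}=\|(f_{\e}(a^{*}a)-1)^{2}a^{*}a\|=\sup_{t\in\sigma(a^{*}a)}(\e/(t+\e))^{2}t\to0$, so $a\in\overline{aa^{*}A}$. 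On the other hand, if $ab_{i}$ is an approximate unit then $x=\lim(ab_{i})x\in\overline{aA}$ for every $x$, so $(i)$ forces $\overline{aA}=A$; this is the bridge I will use throughout.

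For $(i)\Rightarrow(ii)$ I argue by contraposition. Suppose $a$ lies in a proper right modular ideal $J$ with relative left unit $u$, i.e.\ $x-ux\in J$ for all $x$. A standard property of proper modular ideals is that $\mathrm{dist}(u,J)\ge1$, whence the closure $\overline J$ is still proper. Since $J$ is a right ideal containing $a$ we have $aA\subseteq J$, hence $\overline{aA}\subseteq\overline J\subsetneq A$, and by the bridge above $a$ is not approximately right invertible.

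For $(ii)\Rightarrow(iii)$, again contrapositively, suppose some pure state $\rho$ satisfies $\rho(aa^{*})=0$. The left kernel $L_{\rho}=\{x:\rho(x^{*}x)=0\}$ is a proper modular left ideal (a standard consequence of the GNS construction), and $\rho(aa^{*})=0$ means precisely $a^{*}\in L_{\rho}$. Passing to adjoints, $R_{\rho}=L_{\rho}^{*}=\{y:\rho(yy^{*})=0\}$ is a proper modular right ideal containing $a$, so $a$ belongs to a right modular ideal.

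The analytic heart is $(iii)\Rightarrow(i)$, and I expect it to be the main obstacle. Assuming $\rho(aa^{*})>0$ for every pure state, I would first upgrade this to all states: the quasi-state space $Q(A)=\{\phi\ge0:\|\phi\|\le1\}$ is weak$^{*}$-compact and convex with extreme points $P(A)\cup\{0\}$, and $F=\{\phi\in Q(A):\phi(aa^{*})=0\}$ is a weak$^{*}$-closed face; a nonzero state vanishing on $aa^{*}$ would force a nonzero extreme point of $F$, i.e.\ a pure state $\rho$ with $\rho(aa^{*})=0$, contrary to hypothesis. Hence $\phi(aa^{*})>0$ for every state, so $h=aa^{*}$ is strictly positive, which is equivalent to $\overline{hA}=A$, and therefore $\overline{aA}=\overline{aa^{*}A}=A$. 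Finally the canonical net $f_{\e}(h)=a\bigl(a^{*}(h+\e)^{-1}\bigr)$ satisfies $f_{\e}(h)x\to x$ for all $x\in\overline{hA}=A$ and, being positive, is a genuine two-sided approximate unit of the form $ab_{\e}$; this exhibits $a$ as approximately right invertible. The two places demanding care are this passage from pure states to all states via the facial structure of $Q(A)$ and the standard identifications used above (pure states versus proper modular ideals, and strict positivity versus $\overline{hA}=A$).
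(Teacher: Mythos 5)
Your proof is correct, and it takes a genuinely different route from the paper's. The paper does not prove the proposition from scratch: it cites \cite[Theorem 3.6]{esmeral2021approximately} for the nonunital case and only sketches the unital case, where ($ii$)$\Leftrightarrow$($iii$) is quoted from \cite[Theorem 5.3.5]{Murphy} and ($i$)$\Leftrightarrow$($ii$) is reduced to honest right invertibility via Proposition \ref{Esm1} (itself imported from the same external reference), arguing through $aA\subsetneqq A$ and a maximal modular left ideal containing $Aa^*$. You instead give a self-contained cyclic proof ($i$)$\Rightarrow$($ii$)$\Rightarrow$($iii$)$\Rightarrow$($i$) that treats the unital and nonunital cases uniformly: the bridge ``$a$ approximately right invertible iff $\overline{aA}=A$'' together with $\overline{aA}=\overline{aa^*A}$ replaces the reduction to invertibility; the left kernel of a pure state (a proper modular left ideal, adjointed to a right one) replaces the citation of Murphy for one direction of ($ii$)$\Leftrightarrow$($iii$); and the facial/Krein--Milman argument on the quasi-state space plus the characterization ``$aa^*$ strictly positive iff $\overline{aa^*A}=A$'' (Pedersen 3.10.5), capped by the explicit net $b_\e=a^*(aa^*+\e)^{-1}$, closes the cycle. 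All the facts you invoke are standard and correctly applied (in particular, self-adjointness of $aa^*(aa^*+\e)^{-1}$ does upgrade the one-sided convergence to a two-sided approximate unit, and $\mathrm{dist}(u,J)\ge1$ correctly handles non-closed modular ideals). What the paper's approach buys is brevity at the cost of outsourcing the substance; what yours buys is a complete, reference-light argument in which the actual mechanism --- density of $aA$ and strict positivity of $aa^*$ --- is explicit, essentially reconstructing the content of the cited theorem of Esmeral et al.
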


\begin{proof}
    This is essentially \cite[Theorem 3.6]{esmeral2021approximately}.

    In truth, this Theorem is stated for nonunital nonabelian $C^*$-algebras, and proved for nonunital $C^*$-algebras (nonabelian property was not used). However thoroughly reading the proof we can conclude that it remains valid in unital case, as well. For the convenience of the reader we give an outline for the unital case.

    ($ii$) and ($iii$) are equivalent in the most general case \cite[Theorem 5.3.5]{Murphy}.

    Suppose ($i$) does not hold. Then by Proposition \ref{Esm1} $a$ is not right invertible, and hence $a\in aA\subsetneqq A$. Therefore, $Aa^*=(aA)^*$ is a proper left ideal in $A$ and it is, by \cite[Theorem 5.3.3]{Murphy} contained in some maximal modular left ideal, say $J$. Then, $a\in aA\subseteq J^*$, and ($ii$) does not hold.

    Suppose ($ii$) does not hold, i.e.\ $a\in J$ for some maximal modular right ideal $J$. Then $aA\subseteq JA\subseteq J\subsetneqq A$. Hence $a$ is not invertible, i.e. ($i$) does not hold.
\end{proof}

Also, we need the following technical lemma.

\begin{lemma}\label{lema} Let $A$ be a $C^*$-algebra, and let $a$, $b\in A$. If there is a state $\rho$, such that $\rho(aa^*)=\|a\|^2$ and $\rho(bb^*)=0$, then $a\perp^S b$.
\end{lemma}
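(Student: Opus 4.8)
The plan is to exploit that a state is a norm-one positive functional, so its value on any positive element is bounded by the norm of that element, and to control the cross terms arising in the expansion of $(a+bc)(a+bc)^*$ by the Cauchy--Schwarz inequality attached to $\rho$.

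Concretely, I would fix an arbitrary $c\in A$ and show $\|a+bc\|\ge\|a\|$. Since $(a+bc)(a+bc)^*$ is positive and $\rho$ is a state, I have
\[
\rho\bigl((a+bc)(a+bc)^*\bigr)\le\bigl\|(a+bc)(a+bc)^*\bigr\|=\|a+bc\|^2 .
\]
I then expand
\[
(a+bc)(a+bc)^*=aa^*+bca^*+ac^*b^*+bcc^*b^*
\]
and apply $\rho$ termwise.

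The technical heart is to kill the two cross terms using the hypothesis $\rho(bb^*)=0$. The state $\rho$ gives rise to the positive semidefinite sesquilinear form $(x,y)\mapsto\rho(y^*x)$, for which the Cauchy--Schwarz inequality $|\rho(y^*x)|^2\le\rho(x^*x)\,\rho(y^*y)$ holds. Taking $y^*=b$ and $x=ca^*$ yields
\[
|\rho(bca^*)|^2\le\rho(ac^*ca^*)\,\rho(bb^*)=0,
\]
so $\rho(bca^*)=0$, and consequently $\rho(ac^*b^*)=\overline{\rho(bca^*)}=0$, since $(bca^*)^*=ac^*b^*$ and $\rho(x^*)=\overline{\rho(x)}$. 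The remaining term $\rho(bcc^*b^*)$ is the value of $\rho$ on a positive element, hence nonnegative (in fact it also vanishes by the same argument, but I only need nonnegativity).

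Combining these observations gives
\[
\rho\bigl((a+bc)(a+bc)^*\bigr)=\rho(aa^*)+\rho(bcc^*b^*)\ge\rho(aa^*)=\|a\|^2,
\]
and together with the first displayed inequality this produces $\|a+bc\|^2\ge\|a\|^2$. Since $c\in A$ was arbitrary, $a\perp^S b$. I do not anticipate any serious obstacle; the only point demanding care is the bookkeeping in the Cauchy--Schwarz step, arranging the factor $\rho(bb^*)$ to appear, which is precisely what forces both cross terms to vanish.
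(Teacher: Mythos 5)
Your proposal is correct and follows essentially the same route as the paper: expand $(a+bc)(a+bc)^*$, bound the state's value by the norm, and kill the cross terms via the Cauchy--Schwarz inequality for $\rho$ using $\rho(bb^*)=0$. The only cosmetic difference is that the paper first derives $\rho(bc)=0$ for all $c\in A$ and specializes, while you apply Cauchy--Schwarz to each cross term directly (and keep $\rho(bcc^*b^*)\ge 0$ rather than noting it vanishes), which changes nothing of substance.
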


\begin{proof} For any $c\in A$ by Cauchy-Schwartz inequality we have $|\rho(bc)|^2\le\rho(bb^*)\rho(c^*c)=0$. Assume $\|a\|=1$ (divide $a$ by $\|a\|$ if necessary). We have
$$\|a+bc\|^2=\|(a+bc)(a+bc)^*\|\ge|\rho(aa^*+ac^*b^*+bca^*+bcc^*b^*)|.$$
However, $\rho(bca^*)=\rho(bcc^*b^*)=0$ as well as $\rho(ac^*b^*)=\overline{\rho(bca^*)}=0$, whereas $\rho(aa^*)=1$. Hence
$$\|a+bc\|^2\ge 1=\|a\|^2.$$
\end{proof}

The following proposition, our first result is the converse of R1.

\begin{proposition}
Let $A$ be a unital $C^*$-algebra, and let $0\neq a\in A$ be an element that is not right invertible. Then $a$ is not an isolated point of the orthograph.
\end{proposition}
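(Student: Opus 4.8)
The plan is to exhibit a single explicit nonzero element $b$ that is mutually strongly orthogonal to $a$, and to verify both orthogonalities through Lemma \ref{lema}. Since the relation $\strongperp$ is scale invariant, I would normalize $\|a\|=1$, put $p=aa^*\ge 0$ (so $\|p\|=1$), and take the candidate
\[
b=1-p=1-aa^*\ge 0.
\]
Because $0\in\operatorname{spec}(p)$ (see below) and $\operatorname{spec}(b)=1-\operatorname{spec}(p)\subseteq[0,1]$, we have $\|b\|=1$, hence $b\neq 0$.

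The first step is to produce two extreme states. Since $a$ is not right invertible, $aa^*$ is not invertible: if $ac=1$ then $aa^*\ge\|c\|^{-2}\cdot 1$ would be invertible. Hence $0\in\operatorname{spec}(p)$ and there is a state $\rho_0$ with $\rho_0(p)=0$ (equivalently, this is exactly the pure state furnished by Propositions \ref{Esm1} and \ref{Esm2}, as $a$ fails to be approximately right invertible). Dually, by weak$^*$-compactness of the state space the continuous map $\sigma\mapsto\sigma(p)$ attains its maximum $\|p\|=1$ at some state $\sigma_0$, so $\sigma_0(p)=\|a\|^2$. Now I would evaluate both states by functional calculus: restricting a state to the abelian algebra $C^*(p,1)\cong C(\operatorname{spec}p)$ gives a probability measure on $\operatorname{spec}p\subseteq[0,1]$. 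From $\int\lambda\,\d\mu_{\rho_0}=\rho_0(p)=0$ with $\lambda\ge 0$ we get $\mu_{\rho_0}=\delta_0$, so $\rho_0(g(p))=g(0)$ for all continuous $g$; in particular $\rho_0(bb^*)=\rho_0((1-p)^2)=1=\|b\|^2$. Similarly $\int(1-\lambda)\,\d\mu_{\sigma_0}=1-\sigma_0(p)=0$ with $1-\lambda\ge 0$ forces $\mu_{\sigma_0}=\delta_1$, whence $\sigma_0(bb^*)=\sigma_0((1-p)^2)=0$.

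The conclusion then follows by two applications of Lemma \ref{lema}. Using $\sigma_0$ with $\sigma_0(aa^*)=\|a\|^2$ and $\sigma_0(bb^*)=0$ gives $a\strongp b$; using $\rho_0$ with the roles of $a$ and $b$ interchanged, i.e.\ $\rho_0(bb^*)=\|b\|^2$ and $\rho_0(aa^*)=0$, gives $b\strongp a$. Thus $a\strongperp b$ with $b\neq 0$. Finally, $a$ is never self-orthogonal, since $\|a+a(-1)\|=0<\|a\|$ shows $a\not\strongp a$; therefore $b$ cannot be a scalar multiple of $a$, so $[b]\neq[a]$ and $[a]$ has a neighbour in $\Gamma(A)$, i.e.\ it is not isolated. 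The only genuine content is the first step: once the vanishing state $\rho_0$ and the norming state $\sigma_0$ are in hand, the rest is functional calculus and bookkeeping. Hence the main obstacle is translating the purely algebraic hypothesis "not right invertible" into the analytic statement that some state annihilates $aa^*$ — which I would justify either directly from non-invertibility of $aa^*$ or by quoting Proposition \ref{Esm2}.
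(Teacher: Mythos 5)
Your proof is correct, and it follows the same skeleton as the paper's: both hinge on a double application of Lemma \ref{lema}, once with a state that norms $aa^*$ and once with a state that annihilates it, and both take for the neighbour essentially the complement of $aa^*$ (the paper uses $b=\sqrt{1-aa^*}$, you use $b=1-aa^*$). Two of your variations deserve comment. First, the paper obtains the annihilating state by passing through approximate right invertibility (Propositions \ref{Esm1} and \ref{Esm2}), which produces a \emph{pure} state; you instead note that non-right-invertibility of $a$ forces $aa^*$ to be non-invertible, hence $0\in\operatorname{spec}(aa^*)$, so a state vanishing on $aa^*$ exists by elementary spectral theory. This is more self-contained and makes visible that purity is never needed here, since Lemma \ref{lema} only asks for states. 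Second, your choice $b=1-aa^*$ has $bb^*=(1-aa^*)^2$, which is exactly why you need the Dirac-measure argument; the paper's square root is what makes $bb^*=1-aa^*$ \emph{linear} in $aa^*$, so the state evaluations become one-line computations. (Your step can also be shortened without measures: from $0\le aa^*\le 1$ one gets $\rho_0((aa^*)^2)\le\rho_0(aa^*)=0$, and Cauchy--Schwarz gives $\sigma_0((aa^*)^2)\ge\sigma_0(aa^*)^2=1$.) Finally, your closing check that $b$ is not a scalar multiple of $a$, so that $[b]$ is genuinely a distinct vertex, is a detail the paper leaves implicit; it is a worthwhile addition.
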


\begin{proof} Let $a$ be not right invertible. Suppose $\|a\|=1$. Then, by Proposition \ref{Esm1}, $a$ is not approximately right invertible. Further, by Proposition \ref{Esm2}, there is a pure state $\rho$ such that $\rho(aa^*)=0$.

On the other hand, there is another pure state $\tau$ such that $\tau(aa^*)=1$. Consider $b=\sqrt{1-aa^*}$. We have $b\neq0$ for if $b-0$ then $aa^*=1$ and hence $a$ is right invettible -- a contradiction. Then
$$\tau(bb^*)=\tau(1-aa^*)=0,$$
as well as
$$\rho(bb^*)=\rho(1-aa^*)=1.$$

By Lemma \ref{lema}, we have $a\strongp b$ by considering state $\tau$ and also $b\strongp a$ by considering state $\rho$. Hence $b\strongperp a$.
\end{proof}

The following two propositions \ref{IsoNonUni1} and \ref{isolatedNONunital} give a complete answer in non unital $C^*$-algebras.

\begin{proposition}\label{IsoNonUni1} Let $A$ be $C^*$-algebra, and let $b\in A$ be approximately right invertible element. Then $b$ is an isolated point of the orthograph.
\end{proposition}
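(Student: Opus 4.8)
The plan is to prove something slightly stronger than isolation, namely that $a\strongp b$ fails for \emph{every} $a\neq0$; since an edge of $\Gamma(A)$ requires mutual orthogonality, this immediately shows that $b$ has no neighbour and is therefore isolated. First I would record the state characterization of strong orthogonality obtained by combining (\ref{intrinsic}) with the $C^*$-algebraic description of (non-strong) $BJ$-orthogonality (see \cite{ArambasicAFA2014,ArambasicSurvey}): $a\strongp b$ holds if and only if there is a state $\varphi$ on $A$ with $\varphi(a^*a)=\|a\|^2$ and $\varphi(a^*bb^*a)=0$, where the latter is the vanishing of $\varphi$ on the positive element $(b^*a)^*(b^*a)=a^*bb^*a$.

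Suppose, towards a contradiction, that $a\strongp b$ for some $a\neq0$, and let $\varphi$ be a state as above. Passing to the GNS representation $(\pi_\varphi,H_\varphi,\xi_\varphi)$ and setting $\eta=\pi_\varphi(a)\xi_\varphi$, the first condition reads $\|\eta\|^2=\varphi(a^*a)=\|a\|^2>0$, so $\eta\neq0$, while the second reads $\|\pi_\varphi(b)^*\eta\|^2=\varphi(a^*bb^*a)=0$, so $\pi_\varphi(b)^*\eta=0$. If $\varphi$ can be taken to be a \emph{pure} state, then $\pi_\varphi$ is irreducible, the vector state $\rho(x)=\langle\pi_\varphi(x)\eta,\eta\rangle/\|\eta\|^2$ is again a pure state, and $\rho(bb^*)=\|\pi_\varphi(b)^*\eta\|^2/\|\eta\|^2=0$. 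This contradicts Proposition \ref{Esm2}: since $b$ is approximately right invertible, $\rho(bb^*)>0$ for every pure state $\rho$. Thus no such $a$ exists and $b$ is isolated.

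The main point, and the only delicate step, is therefore to arrange that the norming state $\varphi$ is pure; everything else is either the cited characterization or the elementary GNS computation above. For this I would work in the quasi-state space $Q(A)=\{\varphi\in A^*:\varphi\ge0,\ \|\varphi\|\le1\}$, which is weak-$*$ compact and convex. Because $\|a^*a\|=\|a\|^2$, the set $K=\{\varphi\in Q(A):\varphi(a^*a)=\|a\|^2\}$ is a weak-$*$ closed face of $Q(A)$ consisting of states (a convex combination attaining $\|a\|^2$ forces each term to attain it); imposing in addition $\varphi(a^*bb^*a)=0$, the vanishing of a state on a positive element, cuts out a nonempty weak-$*$ closed face $K'$ of $K$, hence a face of $Q(A)$. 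By Krein--Milman $K'$ has an extreme point, which is then extreme in $Q(A)$; as the extreme points of $Q(A)$ are $0$ and the pure states, and as every element of $K'$ satisfies $\varphi(a^*a)=\|a\|^2>0$, this extreme point is a pure state satisfying both constraints. This passage from ``some state'' to ``some pure state'' never uses a unit, only the compactness of $Q(A)$, so the argument applies uniformly in the unital and nonunital cases.
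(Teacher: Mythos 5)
Your proof is correct, and the details check out: combining (\ref{intrinsic}) with the state characterization of (non-strong) $BJ$-orthogonality is legitimate; the two constraints you impose (norm attainment on $a^*a$, vanishing on the positive element $a^*bb^*a$) are both face conditions, so they cut out a weak-$*$ closed face of the quasi-state space $Q(A)$; and you are right to work in $Q(A)$ rather than in the state space, which fails to be weak-$*$ compact exactly in the nonunital case, so your extreme point is indeed $0$ or pure, and the normalization $\varphi(a^*a)=\|a\|^2>0$ rules out $0$. However, your route is genuinely different from, and much heavier than, the paper's. The paper never leaves the algebra: assuming $a\strongp b$, it takes the net $b_i$ from the very definition of approximate right invertibility (so that $bb_i$ is an approximate unit), substitutes $c=-b_ia$ into $\|a+bc\|\ge\|a\|$, and concludes $\|a\|\le\|a-bb_ia\|\to 0$, hence $a=0$; that is the entire proof. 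You instead pass to the dual side: the orthogonality characterization by states, a Krein--Milman/face argument to make the norming state pure, a GNS computation producing a pure state $\rho$ with $\rho(bb^*)=0$, and finally a contradiction with the implication ($i$)$\Rightarrow$($iii$) of Proposition \ref{Esm2}. What your argument buys is symmetry: it is the exact mirror of the paper's proof of Proposition \ref{isolatedNONunital}, so that both isolation and non-isolation are read off from the existence or non-existence of pure states annihilating $bb^*$, and your purity-upgrade step is a reusable observation. What it costs is that it consumes the nontrivial content of Proposition \ref{Esm2} and the state characterization of $BJ$-orthogonality in a place where the definition alone suffices.
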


\begin{proof} Suppose $b\strongperp a$. Then $a\strongp b$ and hence for all $c\in A$ we have
$$\|a+bc\|\ge\|a\|.$$
Choose $c=-b_ia$, where $b_i$ is an approximate right inverse, i.e.\ $bb_i$ is an approximate identity. Then
$$\|a\|\le\|a-bb_ia\|\to0,$$
implying $a=0$.
\end{proof}

\begin{proposition}\label{isolatedNONunital} Suppose that $a\in A$ is not approximately right invertible. Then $a$ is not an isolated point of the orthograph.
\end{proposition}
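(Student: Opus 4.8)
The plan is to mimic the unital construction, where $b=\sqrt{1-aa^*}$ was used, but to replace the unavailable element $1-aa^*$ by a positive element of $A$ that is concentrated at a pure state annihilating $aa^*$. First I would activate the hypothesis through Proposition \ref{Esm2}: since $a$ is not approximately right invertible, condition ($iii$) fails, so there is a pure state $\rho_0$ with $\rho_0(aa^*)=0$. Writing $h=aa^*$ and $m=\|a\|^2=\|h\|$, I would next produce a pure state $\tau_0$ with $\tau_0(h)=m$; this follows because the functional $\phi\mapsto\phi(h)$ attains its maximum $m>0$ over the weak$^*$-compact quasi-state space at an extreme point, which, being nonzero, is a pure state. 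Note $\rho_0\ne\tau_0$ since they disagree on $h$.

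The heart of the argument is to construct $b\in A$ with $0\le b\le1$, $\|b\|=1$, $\rho_0(b)=1$ and $\tau_0(b)=0$. Granting such a $b$, we have $bb^*=b^2$, and a short GNS computation gives $\rho_0(bb^*)=1=\|b\|^2$ and $\tau_0(bb^*)=0$: indeed $\rho_0(b)=\|b\|=1$ forces $\pi_{\rho_0}(b)\xi_{\rho_0}=\xi_{\rho_0}$, hence $\rho_0(b^2)=1$, while $\tau_0(b)=0$ together with $b\ge0$ forces $\tau_0(b^2)=0$. With these values in hand, Lemma \ref{lema} applied to the pair $(a,b)$ through the state $\tau_0$ (using $\tau_0(aa^*)=m=\|a\|^2$ and $\tau_0(bb^*)=0$) yields $a\strongp b$, and applied to the pair $(b,a)$ through the state $\rho_0$ (using $\rho_0(bb^*)=\|b\|^2$ and $\rho_0(aa^*)=0$) yields $b\strongp a$. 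Hence $a\strongperp b$; and since $\rho_0(bb^*)=1\neq0=\rho_0(aa^*)$, the element $b$ is not a scalar multiple of $a$, so it is a vertex distinct from $a$, proving that $a$ is not isolated.

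The main obstacle is precisely the construction of the separating element $b$, which I would obtain from Kadison's transitivity theorem. Let $\xi$ and $\eta$ be the cyclic vectors of the GNS representations of $\rho_0$ and $\tau_0$. The key observation is that $\pi_{\rho_0}(h)\xi=0$, while $\pi_{\tau_0}(h)\eta=m\eta$: a unit vector $\eta$ with $\langle\pi_{\tau_0}(h)\eta,\eta\rangle=m=\|h\|\ge\|\pi_{\tau_0}(h)\|$ must be an eigenvector of the positive operator $\pi_{\tau_0}(h)$ for its largest eigenvalue $m$. Consequently, when $\rho_0$ and $\tau_0$ are unitarily equivalent, realized on a common irreducible space, $\xi$ and $\eta$ are eigenvectors of the self-adjoint operator $\pi(h)$ for the distinct eigenvalues $0$ and $m$, so $\xi\perp\eta$. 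The rank-one projection $P_\xi$ onto $\mathbb C\xi$ then satisfies $P_\xi\xi=\xi$ and $P_\xi\eta=0$, and the positive form of Kadison's transitivity theorem furnishes $b\in A$ with $0\le b\le1$, $\|b\|=1$, acting as $P_\xi$ on $\mathrm{span}\{\xi,\eta\}$, so that $\rho_0(b)=1$ and $\tau_0(b)=0$.

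When instead $\rho_0$ and $\tau_0$ are inequivalent, their representations are disjoint, and the same conclusion follows from the extension of Kadison transitivity to a finite family of pairwise inequivalent irreducible representations, prescribing $P_\xi$ on $H_{\rho_0}$ and $0$ on $H_{\tau_0}$ while keeping $0\le b\le1$. I expect this construction to be the only delicate point: the orthogonality $\xi\perp\eta$ in the equivalent case is exactly where the gap between $\rho_0(aa^*)=0$ and $\tau_0(aa^*)=\|a\|^2$ is used, and it is genuinely needed, since a positive $b$ with $\rho_0(b)=1$ and $\tau_0(b)=0$ can exist only when the two cyclic vectors are orthogonal.
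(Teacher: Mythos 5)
Your proof is correct, and it shares the paper's overall skeleton: Proposition \ref{Esm2} produces a pure state $\rho_0$ annihilating $aa^*$, a second pure state $\tau_0$ attains $\|a\|^2$ on $aa^*$, and Lemma \ref{lema}, applied twice, converts the numerical data $\rho_0(aa^*)=0$, $\rho_0(bb^*)=\|b\|^2$, $\tau_0(aa^*)=\|a\|^2$, $\tau_0(bb^*)=0$ into $a\strongperp b$. But the construction of the element $b$ --- which you rightly identify as the crux --- is genuinely different from the paper's. You build $b$ by a two-state interpolation: realize $\rho_0$ and $\tau_0$ via GNS, split into the unitarily equivalent case (where the two cyclic vectors are eigenvectors of $\pi(aa^*)$ for the distinct eigenvalues $0$ and $\|a\|^2$, hence orthogonal, so the positive form of Kadison transitivity applies on their span) and the inequivalent case (where you invoke the extension of transitivity to finitely many pairwise inequivalent irreducible representations). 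The paper avoids both the case analysis and that stronger multi-representation theorem: it uses transitivity only once, to obtain a positive norm-one $b'$ with $\rho_0(b')=1$, and then defines $b=((1-aa^*)b'(1-aa^*))^{1/2}$, which lies in $A$ because $A$ is an ideal in its unitization. Cutting down by $1-aa^*$ automatically forces $\tau(b^2)=0$ for \emph{every} state $\tau$ with $\tau(aa^*)=1$ (by Cauchy--Schwarz, since $\tau(1-aa^*)=0$), while $\rho_0(aa^*)=0$ guarantees $\rho_0(b^2)=\rho_0(b')=1$; no information about the relative position of the two GNS representations is ever needed. So your route yields a more geometric picture, and in fact a slightly stronger output (a positive contraction in $A$ with prescribed values $1$ and $0$ at the two pure states), at the cost of heavier representation-theoretic machinery; the paper's algebraic cut-down trick is shorter, stays entirely at the level of states and Cauchy--Schwarz, and sidesteps the equivalent/inequivalent dichotomy altogether.
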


\begin{proof}Assume $\|a\|=1$. If $a$ is not approximately right invertible, then by Proposition \ref{Esm2} there is a pure state $\rho$ such that $\rho(aa^*)=0$. Also, there is another pure state, say $\tau$, such that $\tau(aa^*)=1$. Since $\rho$ is pure, there is some $b'\ge0$, $\|b'\|=1$ on which $\rho$ attains its norm, i.e.\ $\rho(b')=1$. (The existence of such $b'$ is a direct consequence of Kadison's transitivity theorem \cite[3.13.2]{Pedersen}, see also the proof of \cite[3.13.14]{Pedersen}.)

Consider the element $b=((1-aa^*)b'(1-aa^*))^{1/2}$. Obviously, $b\ge0$. Although $A$ might not be unital, the last term is valid. Namely, $A$ is an ideal in its unitization $\tilde A$; hence $(1_{\tilde A}-x)y\in A$ for any $x$, $y\in A$.

Further,
$$\rho(b^2)=\rho((1-aa^*)b'(1-aa^*))=\rho(b'-aa^*b'-b'aa^*+aa^*b'aa^*).$$
Since, $\rho(aa^*)=0$, we have $\rho(aa^*b')=\rho(b'aa^*)=\rho(aa^*b'aa^*)=0$, which leads to $\rho(b^2)=\rho(b')=1$. As a consequence $\|b\|=1$, and in particular $b\neq0$. However $\tau(1-aa^*)=0$, which implies $\tau(b^2)=0$.

Therefore we can apply Lemma \ref{lema} to conclude $a\bot^S b$ as well as $b\bot^Sa$. Hence, $a$ is not an isolated point.
\end{proof}

\begin{example}
Let $A=C_0(0,1)$. Then $f\in A$ is an isolated point of the orthograph $\Gamma(C_0(0,1))$ if and only if $f(t)\neq0$ for all $t\in(0,1)$. Indeed, pure states on $A$ are of the form $f\mapsto f(t)$, $t\in(0,1)$. By Propositions \ref{IsoNonUni1} and \ref{isolatedNONunital} $f$ is an isolated point iff $f$ is approximately right invertible and by Proposition \ref{Esm2} this is equivalent to $\rho(f\bar f)>0$ for all pure states $\rho$, i.e.\ $f(t)\neq0$ for all $t\in(0,1)$.
\end{example}

\begin{example}
Let $A=K(H)$ -- the algebra of all compact operators on some Hilbert space $H$. Then $T\in A$ is an isolated point of the orthograph $\Gamma(K(H))$  if and only if $T$ has dense range. Indeed, all pure states on $K(H)$ are given by $T\mapsto\left<Tx,x\right>$. As in the previous example $T$ is not isolated point iff $\left<TT^*x,x\right>>0$ which is equivalent to $T^*$ is injective, i.e.\ $T$ has a dense range.
\end{example}

\section{Noncommutative topology}\label{NonComTop}

In this section we summarize results from four classic Akemann's papers \cite{Akemann1968}, \cite{Akemann1969}, \cite{Akemann1970} and \cite{Akemann1971} necessary for further presentation.

Let $A$ be a $C^*$-algebra, and let $\pi$ denote its universal representation on some Hilbert space $H$. It is well known that the bicommutant of $\pi(A)$, denoted by $\pi(A)''$ which we shall abbreviate to $A''$ can be identified as a Banach space to the second dual $A^{**}$ of $A$. This identification allows to equip $A^{**}$ with von Neumann algebra structure. This second dual, or bicommutant in the universal representation is often called \emph{the enveloping algebra}. In what follows, we shall denote the enveloping algebra by $A''$.

While $A$ need not to have any projection (for instance $A=C_0(0,1)$), its enveloping algebra, being a von Neumann algebra, possess plenty of projections. Among all projections, we distinguish open and closed.

\begin{definition}
    Let $p\in A''$ be a projection.

    a) we say that $p$ is open if it is a supremum of some increasing net $a_\alpha$ of positive elements from $A$;

    b) we say that $p$ is closed if $1-p$ is open.

    c) we say that $p$ is compact, if it is closed and there is some $a\in A$, $a\ge0$ such that $ap=p$. (Of course this makes sense only in nonunital $A$, since otherwise we can take $a=1_A$.)
\end{definition}

\begin{remark}
    Note the analogy with continuous functions. Namely, $E$ is an open set iff $\chi_E$ is a supremum of positive continuous functions, $E$ is compact iff it is dominated by some continuous function vanishing at infinity, etc.
\end{remark}

\begin{remark}
    Note also, that a spectral projection of some positive $a\in A$ is open (respectively closed) if it corresponds to open (r.\ closed) interval. In particular, projection on the kernel of $a$ (corresponding to $\{0\}$) is closed.
\end{remark}

Our main tool is the following noncommutative Urysohn's lemma.

\begin{theorem}\label{Urysohn}

Let $p$, $q\in A''$ be projections such that $p$ is compact, $q$ is closed and $pq=0$. Then there is an $a\in A$ such that $0\le a\le 1$, $ap=p$ and $aq=0$.
\end{theorem}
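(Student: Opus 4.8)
The plan is to recast the statement as an \emph{interpolation} problem and then to build the interpolant by hand. Since $pq=0$ is equivalent to $qp=0$ and hence to $p\le 1-q$, the hypotheses say precisely that the compact projection $p$ lies below the open projection $1-q$. I claim it suffices to find $a\in A$ with $p\le a\le 1-q$. Such an $a$ is automatically self-adjoint with $0\le a\le 1$; from $a\le 1-q$ we get $qaq\le q(1-q)q=0$, so $qaq=0$, whence $a^{1/2}q=0$ and $aq=0$; from $p\le a\le 1$ we get $p\le pap\le p$, i.e.\ $p(1-a)p=0$, whence $(1-a)^{1/2}p=0$ and $ap=p$. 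Thus the theorem reduces to squeezing an element of $A$ between a compact projection and an open projection lying above it.

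Next I would extract concrete data from the two hypotheses. Compactness of $p$ supplies $e\in A$ with $e\ge 0$ and $ep=p$; since then $pe=p$ as well, $e$ commutes with $p$ and acts as the identity on its range, so replacing $e$ by $\min(e,1)$ via functional calculus we may assume $0\le e\le 1$. Closedness of $q$ means $1-q$ is open, so there is an increasing net $(e_\lambda)$ in $A^{+}$ with $e_\lambda\uparrow 1-q$; in particular $e_\lambda\le 1-q$, so $qe_\lambda q\le q(1-q)q=0$, giving $e_\lambda^{1/2}q=0$ and hence $e_\lambda q=0$. I would then set $g_\lambda=e_\lambda^{1/2}e\,e_\lambda^{1/2}\in A$. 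Then $g_\lambda q=0$ and, using $e\le 1$, $0\le g_\lambda\le e_\lambda^{1/2}e_\lambda^{1/2}=e_\lambda\le 1-q$, so each $g_\lambda$ already meets the upper bound exactly. The remaining point is $g_\lambda p\to p$, and this is where compactness re-enters through a noncommutative Dini argument: the decreasing net $p(1-e_\lambda)p\downarrow 0$ should converge to $0$ \emph{in norm} because $p$ is compact, which yields $\|e_\lambda^{1/2}p-p\|\to 0$ and then $\|g_\lambda p-p\|\to 0$.

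The hard part is that this gives only \emph{approximate} interpolants: the $g_\lambda$ converge merely $\sigma$-weakly, their limit $(1-q)e(1-q)$ is typically not in $A$, so they are not norm-Cauchy and one cannot pass to a limit directly. The heart of the proof is therefore an iterative/telescoping construction of a genuinely norm-convergent sequence $a_n\in A$ with $0\le a_n\le 1-q$, $\|p(1-a_n)p\|\to 0$, and $\sum_n\|a_{n+1}-a_n\|<\infty$; its norm limit $a$ then lies in $A$, still satisfies $0\le a\le 1-q$, and obeys $\|p(1-a)p\|=0$, hence $ap=p$. At each stage one repairs the residual defect $p(1-a_n)p$ by adjoining a small positive element of the hereditary subalgebra $\overline{(1-q)A(1-q)}$, again invoking compactness of $p$ to drive the defect down geometrically while keeping the increments summable. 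I expect the two delicate points to be (i) the quantitative compactness (noncommutative Dini) estimate that makes the defect decay controllable, and (ii) arranging each correction to be simultaneously defect-reducing, bounded above by $1-q$, and norm-small, despite the failure of $p$ and $1-q$ to commute.
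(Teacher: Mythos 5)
Your first half is sound. The reduction of the theorem to finding $a\in A$ with $p\le a\le 1-q$ is correct (both directions of that equivalence check out), the approximants $g_\lambda=e_\lambda^{1/2}\,e\,e_\lambda^{1/2}$ do lie in $A$ and satisfy $0\le g_\lambda\le 1-q$ exactly, and the Dini-type claim $\|p(1-e_\lambda)p\|\to0$ is in fact true --- though as written it is an assertion, not an argument. To prove it one needs that the face $F(p)=\{\varphi\in S(A):\varphi(p)=1\}$ (states identified with their normal extensions to $A''$) is weak$^*$-compact when $p$ is compact: with $e\in A$, $0\le e\le 1$, $ep=p$, and a net $a_\alpha\uparrow 1-p$ in $A^+$ witnessing closedness, $F(p)$ is cut out of the weak$^*$-compact quasi-state space by the closed conditions $\varphi(e)=1$ and $\varphi(a_\alpha)=0$ for all $\alpha$; then classical Dini applies on $F(p)$, and the normalization $\psi(\cdot)=\varphi(p\,\cdot\,p)/\varphi(p)$ passes from $F(p)$ to all states, giving $\|p(1-e_\lambda)p\|=\sup_{\varphi\in F(p)}(1-\varphi(e_\lambda))\to0$. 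So that step is repairable.

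The genuine gap is the second half, which you yourself flag: the ``iterative/telescoping construction'' that upgrades approximate interpolation to the exact identity $ap=p$ is described only as a plan, with its ``two delicate points'' left as expectations rather than established steps. That upgrade is not routine epsilon-management; it is the actual content of the lemma. Concretely, suppose $a_n\in A$ satisfies $0\le a_n\le 1-q$ and $pa_np\ge(1-\delta_n)p$. Since $p(1-q)p=p$, the room left above $a_n$ obeys $\|p(1-q-a_n)p\|\le\delta_n$, and your correction $c_n$ must satisfy $0\le c_n\le 1-q-a_n$ while recapturing most of $p(1-q-a_n)p$: each repair step is therefore an instance of the very problem being solved --- exact-ish interpolation of the compact projection $p$ below a positive element of $A''$ that is not in $A$ --- now with no slack at all, and no mechanism for it is offered. (A naive convex combination $a=\sum 2^{-n}b_n$ of good approximants keeps $0\le a\le 1-q$ but only yields $\|p(1-a)p\|<1$, never $0$, and functional calculus cannot finish because $p$ need not sit under any spectral projection of $a$.) Note also that the paper offers no proof to compare against: it imports the statement verbatim from \cite[Lemma III.1]{Akemann1971}, where the proof is a substantive piece of Akemann's noncommutative-topology machinery. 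As it stands, your proposal is a correct reduction plus a correct family of approximants, but not a proof.
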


\begin{proof}
    This is \cite[Lemma III.1]{Akemann1971}.
\end{proof}

Beside this theorem we also need some technical results.

\begin{lemma}\label{TwoClosed}
    Let $p$ and $q\in A''$ be closed projections. If $\|p(q-p\wedge q)\|<1$ then $p\vee q$ is also closed.
\end{lemma}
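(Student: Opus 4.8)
The plan is to pass to the range-projection picture and to cash in Akemann's upper-semicontinuity description of closed projections. Write $r=p\wedge q$, which is closed since arbitrary infima of closed projections are closed. Consider the positive element $h=p+q\in A''$, so that $0\le h\le 2$. For every $x$ one has $\langle hx,x\rangle=\|px\|^2+\|qx\|^2$, hence $\ker h=\ker p\cap\ker q=\operatorname{ran}\bigl((1-p)\wedge(1-q)\bigr)$; equivalently the range projection of $h$ is $\chi_{(0,\infty)}(h)=p\vee q$. Since $p$ and $q$ are closed they are upper semicontinuous elements of $A''$ (the closed projections are exactly the u.s.c.\ projections), and the u.s.c.\ selfadjoint elements form a cone, so $h=p+q$ is again upper semicontinuous. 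The strategy is then: the hypothesis should force a spectral gap of $h$ at $0$, so that the range projection $p\vee q$ coincides with a superlevel projection $\chi_{[\delta,\infty)}(h)$ at a \emph{strictly positive} level, and superlevel projections of u.s.c.\ elements are closed.

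First I would produce the spectral gap. Decomposing with respect to $r$, on the range of $r$ both $p$ and $q$ act as the identity, so $h=2$ there. On the transversal part set $e=p-r$ and $f=q-r$; these are projections with $e\wedge f=0$ and $e\vee f=(p\vee q)-r$. A short computation gives $p(q-r)=pq-r=(p-r)(q-r)=ef$ (using $pr=r$ and $rq=r$), so the hypothesis reads $\|ef\|=c<1$. By the elementary two-projection (Halmos) model, on $e\vee f$ the operator $e+f$ has spectrum contained in $[1-\|ef\|,\,1+\|ef\|]$. Therefore $\sigma(h)\subseteq\{0\}\cup[1-c,\,2]$, and with $\delta:=1-c>0$ one obtains the crucial identity $\chi_{(0,\infty)}(h)=\chi_{[\delta,\infty)}(h)=p\vee q$.

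The decisive step is then to argue that, for the upper semicontinuous positive element $h$, the superlevel projection $\chi_{[\delta,\infty)}(h)$ with $\delta>0$ lying in the gap is closed; equivalently that $\chi_{[0,\delta)}(h)=1-(p\vee q)$ is open, the noncommutative analogue of the fact that a strict sublevel set $\{h<\delta\}$ of an u.s.c.\ function is open. I expect this to be \emph{the main obstacle}, and it is precisely where the strict inequality $<1$ enters: without the gap $\chi_{(0,\infty)}(h)$ is merely the range projection of a positive element, which in general need not be closed, whereas the gap pins it down at a positive level where upper semicontinuity can be converted into closedness. Concretely I would realise $h$ as the weak$^{*}$-limit of a decreasing net $a_i+b_j\in A^+$ built from approximating nets for the closed projections $p$ and $q$, and combine this with the gap to exhibit $1-(p\vee q)$ as the supremum of an increasing net in $A^+$ (Akemann's criterion for openness), the gap ensuring that the relevant continuous functional calculus respects these monotone limits. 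A minor separate point is the (possibly) nonunital case, where the approximants and the symbol $1$ are to be read in the unitisation, exactly as in the proof of Proposition \ref{isolatedNONunital}; once $1-(p\vee q)$ is open, $p\vee q$ is closed by definition.
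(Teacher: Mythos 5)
Your reduction is correct as far as it goes, but the proof has a genuine gap at exactly the point you yourself flag as ``the main obstacle''. The part that works: with $r=p\wedge q$, $e=p-r$, $f=q-r$, the identity $p(q-r)=ef$ is right, the two-projection (Halmos) analysis correctly gives $\sigma(p+q)\subseteq\{0\}\cup[1-c,2]$ with $c=\|ef\|<1$, and the range projection of $h=p+q$ is indeed $p\vee q$. But this reduction is elementary Hilbert-space operator theory; all of the $C^*$-content of the lemma is concentrated in the remaining claim that an upper semicontinuous positive $h$ with a spectral gap at $0$ has \emph{closed} range projection --- and that claim is asserted, not proved. The sketch you offer (realise $h$ as a decreasing weak$^*$ limit of elements $a_i+b_j\in A^+$ and let ``the gap ensure that the relevant continuous functional calculus respects these monotone limits'') does not work as stated: continuous functional calculus does not commute with monotone weak$^*$ limits, since multiplication is not jointly weak$^*$-continuous, and making precise the sense in which the spectral gap rescues this is precisely the theorem you are trying to prove. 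A second, subtler issue hides in ``the u.s.c.\ selfadjoint elements form a cone'': there are several inequivalent notions of semicontinuity in $A''$ (weak, middle and strong, in Brown's terminology). The weak notion --- upper semicontinuity of $\varphi\mapsto\varphi(h)$ on the quasi-state space --- is trivially closed under sums, but it is too weak to yield closedness of spectral projections; the stronger notions, for which such spectral statements can be proved, are not obviously stable under sums. Your argument needs a single notion enjoying both properties simultaneously, and supplying one is not routine.

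You should also know that the paper does not prove this lemma at all: its ``proof'' is a citation of Akemann's Theorem II.7 (the 1969 Stone--Weierstrass paper), so the lemma is treated there as a known result from the literature. A self-contained argument such as yours would therefore amount to reproving Akemann's theorem. Your reduction to the spectral-gap statement is a sensible first step in that direction, but the step you leave open is essentially equivalent in depth to the quoted result, so the proposal cannot be accepted as a proof.
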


\begin{proof}
    This is \cite[Theorem II.7]{Akemann1969}.
\end{proof}

Recall that a projection $p$ in some von Neumann algebra $A$ is called \emph{minimal}, if $pAp=\mathbb C p$, which, in this case, is equivalent to the fact that if $q\le p$ implies $q=0$ or $q=p$. The enveloping algebra $A''$ has a lot of minimal projections. Every minimal projection is closed, see \cite[Proposition II.4]{Akemann1969} and also \cite[Corollary 2]{Akemann1968}.

\begin{lemma}\label{PhiMinBij}
    There is a bijective correspondence between pure states on $A$ and minimal projections in $A''$ such that if minimal $p\in A''$ corresponds to pure state $\varphi$ then
    \begin{equation}\label{PhiMin}
    pap=\varphi(a)p
    \end{equation}
\end{lemma}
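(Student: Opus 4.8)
The plan is to establish the bijective correspondence between pure states on $A$ and minimal projections in $A''$ using the standard GNS machinery together with the identification of $A^{**}$ with $A''$. First I would recall that by the enveloping construction every state $\varphi$ on $A$ extends uniquely to a normal state on $A''$ (this is precisely the point of the universal representation: states on $A$ correspond to normal states on $A''$, since $A^{**}=A''$ as a Banach space and normal functionals on $A''$ are exactly the elements of $A^*$). For a normal state on a von Neumann algebra there is a support projection, namely the smallest projection $p$ with $\varphi(p)=1$, equivalently $\varphi(a)=\varphi(pap)$ for all $a$. The core of the argument is to show that $\varphi$ is \emph{pure} exactly when this support projection $p$ is \emph{minimal}, and that in that case the sharper identity $pap=\varphi(a)p$ holds rather than merely $\varphi(pap)=\varphi(a)$.

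Next I would carry out the two directions. For the forward direction, suppose $\varphi$ is pure with support $p$. Minimality means $pA''p=\mathbf C p$. I would argue that $pA''p$ is a von Neumann algebra with unit $p$, and that $\varphi$ restricted to it is a faithful normal state (faithful because $p$ is the support). A pure state restricted to $pA''p$ remains a character-like faithful state; if $pA''p$ were larger than $\mathbf C p$ it would carry a nontrivial projection $q<p$, and the decomposition $\varphi=\varphi(q)\,\varphi_q+\varphi(p-q)\,\varphi_{p-q}$ into the corresponding compressed states would contradict purity. Hence $pA''p=\mathbf C p$, so $p$ is minimal. Conversely, if $p$ is minimal then $pap\in\mathbf C p$ for every $a$, so $pap=\lambda(a)p$ for a scalar $\lambda(a)$; applying $\varphi$ and using $\varphi(p)=1$ together with $\varphi(pap)=\varphi(a)$ gives $\lambda(a)=\varphi(a)$, which is exactly the desired formula \eqref{PhiMin}. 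The same computation shows the compressed state is a character, forcing purity. This simultaneously yields both the minimality and the identity $pap=\varphi(a)p$.

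Finally I would check that the assignment $\varphi\mapsto p$ is a genuine bijection. Injectivity follows from the formula \eqref{PhiMin}: the projection $p$ determines $\varphi$ via $\varphi(a)p=pap$, so distinct pure states yield distinct support projections (and conversely a given $p$ recovers $\varphi$). For surjectivity, given a minimal projection $p\in A''$ I define $\varphi$ by the relation $pap=\varphi(a)p$, which is well-defined precisely because $pA''p=\mathbf C p$; one verifies $\varphi$ is a state (positivity from $a\ge0\Rightarrow pa^*ap\ge0$, normalization from $\varphi(1)=1$) and that it is pure by the argument above, and that its support is the given $p$. I would lean on the cited references \cite[Proposition II.4]{Akemann1969} and \cite[Corollary 2]{Akemann1968} for the fact that minimal projections are closed, so that they indeed sit inside the relevant class.

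The step I expect to be the main obstacle is the careful passage between $A$ and $A''$: one must be sure that the formula $pap=\varphi(a)p$ is claimed for $a\in A$ (as in the statement) while the minimality $pA''p=\mathbf C p$ lives in the full enveloping algebra, and that normality of the extended state is what guarantees the support projection exists and behaves correctly. Reconciling $\varphi(pap)=\varphi(a)$ (always true from the support property) with the stronger pointwise identity $pap=\varphi(a)p$ (which needs minimality) is the delicate point, and the density of $A$ in $A''$ in the $\sigma$-weak topology, combined with normality of $\varphi$, is what lets me transfer the scalar-valued identity from $A''$ down to the stated formula on $A$.
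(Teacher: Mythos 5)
Your overall architecture (normal extension, support projection, deriving $pap=\varphi(a)p$ from minimality, and checking the two constructions are mutually inverse) matches the paper's, but the step where you prove that a pure state has \emph{minimal} support contains a genuine gap. You claim that if the support $p$ were not minimal, a nontrivial projection $q<p$ would yield the convex decomposition $\varphi=\varphi(q)\,\varphi_q+\varphi(p-q)\,\varphi_{p-q}$ into compressed states $\varphi_q(\cdot)=\varphi(q\,\cdot\,q)/\varphi(q)$. That identity is false in general: expanding $\varphi(a)=\varphi(pap)$ gives $\varphi(qaq)+\varphi((p-q)a(p-q))$ \emph{plus} the cross terms $\varphi(qa(p-q))+\varphi((p-q)aq)$, and nothing in your argument makes these vanish (your derivation uses only that $p$ is the support, not purity, so it would have to hold for every normal state with non-minimal support). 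A counterexample: on $M_2(\mathbb C)$ take $\varphi(a)=\frac23a_{11}+\frac13a_{22}$, whose support is $p=1$, and let $q$ be the projection onto $\mathbb C(e_1+e_2)$; then $\varphi(e_{11})=\frac23$ while $\varphi(qe_{11}q)+\varphi((1-q)e_{11}(1-q))=\frac14+\frac14=\frac12$. So the splitting you want to use to contradict purity simply does not exist; it would require $q$ to lie in the centralizer of $\varphi$. This is exactly the point the paper does \emph{not} argue by hand: it cites \cite[Theorem 3.13.6]{Pedersen} for the existence of a minimal projection $p$ with $\varphi(1-p)=0$. A correct self-contained proof of this direction would instead pass through irreducibility of the GNS representation of $\varphi$ (the support of the normal extension is the rank-one projection onto the cyclic vector inside the summand $\pi_\varphi(A)''=B(H_\varphi)$ of $A''$), which is a different and heavier argument than a convex splitting.

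There is also a secondary gap in the converse direction: to show that the state defined from a minimal $p$ by $pap=\varphi(a)p$ is pure, you say ``the compressed state is a character, forcing purity'' and later refer back to ``the argument above.'' Being a character on $pA''p\cong\mathbb C$ is vacuous and implies nothing about purity on $A$, and ``the argument above'' is the flawed decomposition. The argument you need here is the paper's: if $\psi$ is a positive functional with $\psi\le\varphi$, then $\psi(1-p)\le\varphi(1-p)=0$, so by Cauchy--Schwarz $\psi(a)=\psi(pap)=\psi(\varphi(a)p)=\varphi(a)\psi(p)$ for all $a$, i.e.\ $\psi$ is a scalar multiple of $\varphi$, which is precisely the characterization of purity. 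This second gap is repairable in two lines; the first one is the real defect and needs either the citation or a genuinely different argument.
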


\begin{proof}
    Let $\varphi$ be a pure state on $A$. It can be uniquely extended to a normal state on $A''$. Since $A''$ is always unital, throughout the proof, $1$ will always stand for unit in $A''$.

    By \cite[Theorem 3.13.6]{Pedersen} there is a minimal projection $p$ such that $\varphi(1-p)=0$. The latter implies $\varphi(a(1-p))=\varphi((1-p)a)=0$ for all $a\in A$. By minimality of $p$ we have $pap=\lambda p$ for a suitable $\lambda\in\mathbb C$. However, $\varphi(a)=\varphi(pap)=\varphi(\lambda p)=\lambda$ and hence (\ref{PhiMin}).

    Let $p\in A''$ be minimal. Then $pap=\lambda p$ for some $\lambda\in\mathbb C$. Define $\varphi(a):=\lambda$, i.e.\ (\ref{PhiMin}) is definition of $\varphi$ for a given $p$. It is easy to verify, that $\varphi$ is linear, positive and of norm one, hence a state. Also, $\varphi(1-p)=0$. If there is another positive linear $\psi\le\varphi$, then also $\psi(1-p)=0$ and hence $\psi(a)=\psi(pap)=\psi(\varphi(a)p)=\varphi(a)\psi(p)$, which implies $\psi=c\varphi$. (Here, we identify $\varphi$, $\psi$ with their normal extensions to $A''$.)
\end{proof}

\begin{remark}
    The proof of the previous lemma can also be tracked through various statements from \cite{Pedersen}, e.g.\ 3.6.11, 3.10.7, 3.11.10 and 3.13.6. An alternative comprehensive proof is given for the convenience of the reader.
\end{remark}

\begin{lemma}
    Let $A$ be a $C^*$-algebra, let $A''$ be its enveloping algebra, and let $p\in A''$ be a minimal projection. Then $p$ is compact.
\end{lemma}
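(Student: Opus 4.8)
The plan is to check the two requirements in the definition of compactness separately. Since every minimal projection is closed (as recalled immediately before Lemma \ref{PhiMinBij}), the only thing left to produce is a positive element $a\in A$ with $ap=p$.

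First I would attach to $p$ the pure state $\varphi$ furnished by Lemma \ref{PhiMinBij}, so that $pap=\varphi(a)p$ for every $a\in A$. The idea is then to exploit the same fact that was used in the proof of Proposition \ref{isolatedNONunital}: a pure state attains its norm on a positive contraction. Concretely, by Kadison's transitivity theorem there is an $a\in A$ with $0\le a\le 1$ and $\varphi(a)=1$. For this $a$ the minimality relation gives $pap=\varphi(a)p=p$, equivalently $p(1-a)p=0$.

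The key step, and the only one requiring a genuine argument, is to upgrade the two-sided identity $pap=p$ to the one-sided identity $ap=p$ demanded by the definition. I would do this through the $C^*$-identity: writing $1-a=\big((1-a)^{1/2}\big)^2$ and using that both $p$ and $(1-a)^{1/2}$ are self-adjoint, the element $x=(1-a)^{1/2}p$ satisfies
\[
\|x\|^2=\|x^*x\|=\|p(1-a)^{1/2}(1-a)^{1/2}p\|=\|p(1-a)p\|=0,
\]
so $(1-a)^{1/2}p=0$, and multiplying once more by $(1-a)^{1/2}$ yields $(1-a)p=0$, that is, $ap=p$. Note that $1-a$ and its square root live in the unitization (equivalently in $A''$), which causes no difficulty, since the definition of compactness only asks that the multiplier $a$ itself belong to $A$.

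Combining the two parts — that $p$ is closed, and that $ap=p$ for a positive $a\in A$ — gives exactly that $p$ is compact. I expect the passage from $pap=p$ to $ap=p$ to be the main (and essentially the only) obstacle; everything else is a direct appeal to results already established, namely the closedness of minimal projections, the correspondence of Lemma \ref{PhiMinBij}, and the norm-attainment of pure states already invoked in Proposition \ref{isolatedNONunital}.
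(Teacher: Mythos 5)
Your proposal is correct and follows essentially the same route as the paper's own proof: invoke closedness of minimal projections, use the pure state from Lemma \ref{PhiMinBij} together with Kadison's transitivity theorem to obtain a positive $a\in A$ with $pap=p$, and then apply the $C^*$-identity to $(1-a)^{1/2}p$ to upgrade this to $ap=p$. The only cosmetic difference is that you flag explicitly that $1-a$ lives in $A''$ rather than $A$, a point the paper handles with a parenthetical remark.
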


\begin{proof}
    Every minimal $p\in A''$ is closed as it was already mentioned. Also, by Lemma \ref{PhiMinBij} there is the unique pure state $\varphi_p$ on $A$ such that $\varphi_p(p)=1$ and $pap=\varphi_p(a)p$ for each $a\in A$. By Kadison transitivity theorem, there is a positive $a\in A$ such that $\|a\|=1$ and $\varphi_p(a)=1$. Hence, $pap=p$. From $C^{*}$-identity
    $$\|(1-a)^{\frac{1}{2}}p\|^2=\|p(1-a)p\|=\|p-pap\|=0,$$
    we obtain $(1-a)^{\frac{1}{2}}p=0$, so $(1-a)p=0$. So we proved $ap=p$ which by definition ensures that $p$ is compact.

    (As we have already mentioned, $1$ is the unit in $A''$.)
\end{proof}

\begin{lemma}\label{ProjLE}
    Let $a$ be a positive element of $C^*$-algebra $A$ $(a\neq0)$. Then, there is a minimal projection $p\in A''$ such that
    $$ap=p\qquad\mbox{and}\qquad p\le a/\|a\|.$$
\end{lemma}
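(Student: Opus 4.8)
The plan is to normalize to $\|a\|=1$ and then produce $p$ as the minimal projection attached, via Lemma \ref{PhiMinBij}, to a pure state on which $a$ attains its norm. After this normalization $0\le a\le 1$ holds in the unitization, hence in $A''$ (where $1$ denotes the unit of $A''$), so it suffices to find a minimal projection $p\in A''$ with $ap=p$ and $p\le a$; undoing the normalization at the end gives $p\le a/\|a\|$.

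First I would secure the pure state. Since $a\ge0$ with $\|a\|=1$, the value $1=\|a\|$ is attained as a supremum of $\varphi(a)$ over (quasi-)states, and the set $\{\varphi:\varphi(a)=1\}$ is a nonempty weak$^*$-closed face of the quasi-state space that does not contain $0$. An extreme point of this face exists by the Krein--Milman theorem and is necessarily a pure state, so we obtain a pure state $\varphi$ with $\varphi(a)=1$. Applying Lemma \ref{PhiMinBij} to $\varphi$ yields a minimal projection $p\in A''$ satisfying $pbp=\varphi(b)p$ for all $b\in A$; in particular $pap=\varphi(a)p=p$.

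The identity $ap=p$ then follows by the same $C^*$-computation as in the preceding lemma: using $0\le 1-a$ together with the $C^*$-identity,
\[
\|(1-a)^{1/2}p\|^2=\|p(1-a)p\|=\|p-pap\|=0,
\]
so $(1-a)^{1/2}p=0$, whence $(1-a)p=0$, i.e.\ $ap=p$. Taking adjoints gives $pa=p$ as well.

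For the order relation $p\le a$, I would exploit $ap=pa=p$ to write $a-p$ as a compression. From $(a-p)p=p(a-p)=0$ we get $a-p=(1-p)(a-p)(1-p)=(1-p)a(1-p)$, and the right-hand side is positive because $a\ge0$ and $1-p$ is self-adjoint. Hence $a\ge p$, which is the desired conclusion. The only delicate point is the very first step in the non-unital setting---guaranteeing a pure state at which $a$ attains its norm---which is handled by passing to the weak$^*$-compact quasi-state space; everything afterward is an elementary manipulation inside the von Neumann algebra $A''$.
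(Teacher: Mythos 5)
Your proof is correct and follows essentially the same route as the paper's: a pure state attaining the norm of $a$, the associated minimal projection from Lemma \ref{PhiMinBij} giving $pap=\varphi(a)p=p$, the $C^*$-identity argument to upgrade this to $ap=p$, and positivity of a compression to conclude $p\le a$. Incidentally, your final display $a-p=(1-p)a(1-p)\ge0$ is the corrected form of the paper's computation (whose displayed expression $p(1_{A''}-a)p$ is a typo, since that quantity equals $0$), and your Krein--Milman argument on the quasi-state space merely fills in the standard fact the paper invokes without proof.
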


\begin{proof}
    Suppose $\|a\|=1$, otherwise divide $a$ by $\|a\|$. There is a pure state $\varphi$ that attains its norm on $a$. Let $p\in A''$ be the minimal projection that corresponds to $\varphi$, i.e.\ $\varphi(p)=1$, $pbp=\varphi(b)p$ for all $b\in A$. Then, as in the previous proof, we can conclude $ap=p$, and also
    $$a-p=a-pap=p(1_{A''}-a)p\ge0.$$
\end{proof}

\begin{lemma}\label{VEE}
    Let $p$, $q\in A''$ be minimal projections. Then $p\vee q$ is closed. Furthermore, if $r$ is projection such that $pr=qr=0$, then $(p\vee q)r=0$ as well.
\end{lemma}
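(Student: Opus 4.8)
The plan is to treat the two assertions separately: the closedness of $p\vee q$ via Akemann's criterion (Lemma \ref{TwoClosed}), and the orthogonality statement by elementary manipulation in the projection lattice of $A''$.

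For closedness I would first dispose of the trivial case $p=q$, where $p\vee q=p$ is closed because every minimal projection is closed. Assuming $p\neq q$, I would show $p\wedge q=0$: since $p\wedge q\le p$, minimality of $p$ forces $p\wedge q\in\{0,p\}$, and $p\wedge q=p$ would give $p\le q$, hence $p=q$ by minimality of $q$ (and $p\neq0$), a contradiction. With $p\wedge q=0$ the hypothesis of Lemma \ref{TwoClosed} reduces to $\|pq\|<1$. To get this, I would invoke the $C^*$-identity $\|pq\|^2=\|pqp\|$ (using $q^2=q$) together with the minimality relation from Lemma \ref{PhiMinBij}: writing $\varphi_p$ for the pure state corresponding to $p$, minimality in $A''$ gives $pqp=\varphi_p(q)p$, so $\|pq\|^2=\varphi_p(q)\le1$.

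The crux is the \emph{strict} inequality. I would argue by contradiction: if $\varphi_p(q)=1$, then $pqp=p$, whence $\|(1-q)^{1/2}p\|^2=\|p(1-q)p\|=0$, giving $(1-q)p=0$, i.e.\ $qp=p$ and thus $p\le q$; minimality of $q$ then forces $p=q$, contradicting $p\neq q$. Hence $\varphi_p(q)<1$, so $\|pq\|<1$, and Lemma \ref{TwoClosed} yields that $p\vee q$ is closed. I expect this step to be the main obstacle, everything else being bookkeeping; one subtlety to verify is that $pqp=\varphi_p(q)p$ holds for $q\in A''$ (not just $q\in A$), which follows since minimality means $pA''p=\mathbb{C}p$ and the normal extension of $\varphi_p$ satisfies $\varphi_p(pbp)=\varphi_p(b)$ for all $b\in A''$.

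For the second assertion minimality is not needed, only the orthocomplemented structure of the projections of $A''$. From $pr=0$ one gets $rp=(pr)^*=0$, hence $r(1-p)=r-rp=r$, i.e.\ $r\le1-p$; likewise $r\le1-q$. Therefore $r\le(1-p)\wedge(1-q)=1-(p\vee q)$ by De Morgan's law in the projection lattice, and $r\le1-(p\vee q)$ is exactly $(p\vee q)r=0$.
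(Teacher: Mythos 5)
Your proposal is correct and follows essentially the same route as the paper: dispose of $p=q$, use minimality to get $p\wedge q=0$, compute $\|pq\|^2=\|pqp\|=\varphi_p(q)$ via the $C^*$-identity and Lemma \ref{PhiMinBij}, rule out $\varphi_p(q)=1$ by the same contradiction argument, and invoke Lemma \ref{TwoClosed}; your second part is just the De Morgan-dual of the paper's one-line lattice argument ($r\le 1-p,\,1-q$ versus $p,q\le 1-r$). Your explicit remark that $pqp=\varphi_p(q)p$ must be justified for $q\in A''$ (via $pA''p=\mathbb{C}p$ and normality of the extension) is a nice touch that the paper only handles implicitly.
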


\begin{proof}
    If $p=q$ the statement is trivial. Otherwise, $p\wedge q=0$, for if not, then $p\wedge q$ would be a nontrivial projection less then a minimal one.

    Once again, we identify pure states on $A$ with their normal extensions to the enveloping algebra $A''$.

    Let $\varphi_p$ be (the unique) pure state associated to $p$, i.e.\ such that $\varphi_p(p)=1$ and $pap=\varphi_p(a)p$. We claim $\varphi_p(q)<1$. Suppose it is not, i.e.\ $\varphi_p(q)=1$. Then $pqp=p$, i.e.\ $0=p(1-q)p=((1-q)p)^*(1-q)p$ implying by $C^*$-identity $(1-q)p=0$. Therefore $qp=p$ which means $q\ge p$ which is, due to the minimality of $q$, impossible, unless $q=p$.

    Thus, $\varphi_p(q)<1$ and hence
    $$\|qp\|^2=\|pqp\|=\|\varphi_p(q)p\|<1.$$
    Therefore, $\|p(q-p\wedge q)\|=\|pq\|<1$, which implies $p\vee q$ is closed by Lemma \ref{TwoClosed}.

    For the second part, note that $pr=0$ is equivalent to $p\le 1-r$, as well as $qr=0$ is equivalent to $q\le 1-r$. It follows $\sup\{p,q\}=p\vee q\le 1-r$, i.e.\ $(p\vee q)r=0$.
\end{proof}

\section{The diameter of the orthograph}\label{Diameter}

In this section, we discuss the diameter of the orthograph $\Gamma(A)$.

Recall that the path is a finite sequence of vertices, each of its adjacent entries are connected, and that the length of a path is the number of edges. Also, the distance between two vertices from some connected component is the minimal length of a path that connects these two vertices. Finally, the diameter of the graph (and also of its connected component) is the maximal distance of arbitrary two vertices.

This question was discussed in \cite{ArambasicBJMA2020} for a variety of concrete $C^*$-algebras, namely for full matrix algebras $M_n(\mathbb C)$, $n\ge2$, the algebra $B(H)$ of all bounded operators on some infinite dimensional Hilbert space $H$ as well as the commutative algebra $C(K)$, with $K$ compact Hausdorff.

The answer given in \cite{ArambasicBJMA2020} is the following:

1) The algebra $M_2(\mathbb C)$ is somewhat specific. All nontrivial connected components consists of precisely two elements; more specific, of those linear operators of rank one with mutually orthogonal ranges. In particular, the diameter of all connected components is equal to $1$.

2) In all other discussed examples, there is only one non trivial connected component, with finite diameter; in fact less or equal to $4$. In the case of $M_3(\mathbb C)$ the diameter is exactly $4$, in all $M_n(\mathbb C)$, $n\ge 4$ and $B(H)$ the diameter is $3$, whereas for $C(K)$, it is $3$ if there is at least one point in $K$ with countable local basis, and $2$ if there is no points with countable local basis.

The result we will present in Theorem \ref{Ocena dijametra} excludes three small algebras, $\mathbb C\cong M_1(\mathbb C)$, $\mathbb C\oplus\mathbb C\cong M_1(\mathbb C)\oplus M_1(\mathbb C)$ and $M_2(\mathbb C)$, all of them unital. Due to their simplicity, we can describe their orthographs easily.

\begin{proposition}
The algebra $M_1$ has no nontrivial noninvertible elements and its projective space is a singleton. Therefore, its orthograph $\Gamma(M_1)$ has only one vertex and no edges.

The algebra $M_1\oplus M_1$ has, up to a scalar multiple, two nontrivial noninvertible elements, $(1,0)$ and $(0,1)$. They are mutually $BJ$-orthogonal. Therefore, its orthograph $\Gamma(M_1\oplus M_1)$, beside isolated points, has two vertices and one edge connecting them.

In the algebra $M_2$ all non invertible elements belong to some connected component in its orthograph with exactly two members.
\end{proposition}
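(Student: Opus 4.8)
The plan is to dispatch the two scalar algebras by inspection and to treat $M_2$ by reducing strong orthogonality to a condition on the ranges of rank-one operators.

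For $M_1=\mathbb{C}$ every nonzero element is invertible, so the projective space is a single point, which is isolated by R1; thus $\Gamma(M_1)$ is one vertex with no edges. For $M_1\oplus M_1$ an element $(z,w)$ is invertible iff $zw\neq0$, so up to scalars the only non-invertible classes are $[(1,0)]$ and $[(0,1)]$. I would check $(1,0)\strongperp(0,1)$ by hand: from $(0,1)c=(0,c_2)$ we get $\|(1,0)+(0,c_2)\|=\max(1,|c_2|)\ge1$, and symmetrically (this also follows from Lemma \ref{lema} using the two evaluation states). Since every invertible class is isolated by R1, $\Gamma(M_1\oplus M_1)$ is a set of isolated points together with the single edge joining $[(1,0)]$ and $[(0,1)]$.

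The substantive case is $M_2$. A nonzero non-invertible $a\in M_2$ has rank one, and by Lemma \ref{modul*} the relation $a\strongp b$ depends only on $|a^*|=(aa^*)^{1/2}$ and $|b^*|$. As $|a^*|$ is a positive multiple of the rank-one projection $P_a$ onto the range $R_a$ of $a$, and orthogonality is scale-free, the edge relation is governed by the projections $P_a,P_b$. The key computation is, for rank-one projections $P,Q$, to decide when $P\strongperp Q$. Normalising $P=P_{e_1}$ and letting $Q$ project onto the unit vector $v=(\alpha,\beta)$, the perturbations $Qc$ run over all matrices $vy^*$ with $y\in\mathbb{C}^2$, so $P\strongp Q$ means $\|P+vy^*\|\ge1$ for every $y$. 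Taking $y=\lambda e_1$ gives $\|P+vy^*\|=\|e_1+\bar\lambda v\|$, whose infimum over $\lambda$ equals $\sqrt{1-|\alpha|^2}$; this is $<1$ as soon as $\alpha\neq0$, so orthogonality forces $\alpha=0$. Conversely, if $\alpha=0$ then $v=e_2$ and $\|P+vy^*\|\ge\|(P+vy^*)e_1\|\ge1$ for all $y$. The condition $\alpha=0$ is symmetric in $P,Q$, so $P\strongperp Q$ iff $\langle e_1,v\rangle=0$, i.e.\ iff $Q=1-P$.

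Hence two non-invertible classes in $M_2$ are adjacent precisely when their ranges are orthogonal lines, and each line $R\subseteq\mathbb{C}^2$ has a unique orthogonal line $R^\perp$. The final step is to read off the component structure on the actual vertex set, and this is where I expect the real difficulty: the vertices of $\Gamma(M_2)$ are all rank-one matrices modulo scalars, whereas strong orthogonality detects only the range, so a priori every class with range $R$ is joined to every class with range $R^\perp$ irrespective of co-range. Controlling this co-range freedom is the crux I would scrutinise most, since the range-only nature of strong orthogonality is exactly what could enlarge a component beyond a single antipodal pair $\{[P_R],[P_{R^\perp}]\}$. Resolving it is the heart of the matter; once settled, one reads off the component structure asserted in the statement, with all rank-two classes isolated by R1.
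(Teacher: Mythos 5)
Your treatment of $M_1$ and $M_1\oplus M_1$ is correct and complete; the paper dismisses those two cases as obvious, so no comparison is needed there. For $M_2$, however, note first that the paper proves nothing at all: it simply cites \cite[Proposition 2.4]{ArambasicBJMA2020}. You attempted to supply an actual argument, and your attempt has a genuine gap exactly where you flagged it. Worse, the gap cannot be closed along the lines you set up, because your own (correct) intermediate results contradict the conclusion you hope to ``read off''. Via Lemma~\ref{modul*} you correctly reduced adjacency of singular classes to orthogonality of their ranges, with the co-ranges playing no role whatsoever. But then the component of a rank-one class is the complete bipartite graph joining \emph{all} classes with range $R$ to \emph{all} classes with range $R^\perp$, and each of these families is infinite, parametrized by the co-range. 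Concretely: $e_1e_1^*$, $e_1e_2^*$ and $e_2e_2^*$ are pairwise non-proportional singular matrices, and both $e_1e_1^*\strongperp e_2e_2^*$ and $e_1e_2^*\strongperp e_2e_2^*$ hold. For instance, $e_1e_2^*+e_2e_2^*c$ has second column $(1,c_{22})^{T}$, so $\|e_1e_2^*+e_2e_2^*c\|\ge\sqrt{1+|c_{22}|^2}\ge1=\|e_1e_2^*\|$ for every $c$, and the remaining three verifications are identical column-norm bounds. Hence the component of $[e_2e_2^*]$ contains at least three (in fact infinitely many) distinct vertices, so the ``exactly two members'' structure is simply not a consequence of the range-only edge relation you derived: your final step is false rather than unfinished.

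The lesson is that the co-range freedom you singled out is not a technicality to be controlled; under the relation you derived it genuinely enlarges the components. Any proof of the statement as written must rest on a characterization of mutual strong orthogonality in $M_2$ that constrains kernels (co-ranges) as well as ranges --- this is what the citation of \cite[Proposition 2.4]{ArambasicBJMA2020} is implicitly carrying --- and such a characterization cannot be reached through Lemma~\ref{modul*}, which by design forgets the co-range. Your computation in fact exposes a real tension within the paper itself: Lemma~\ref{modul*} implies that $[a]$ and $[\,|a^*|\,]$ have identical neighbourhoods, so any non-isolated singular class with $a$ not proportional to $|a^*|$ (such as $[e_1e_2^*]$) automatically lies in a component with at least three vertices; the two-membered claim is therefore incompatible with that lemma under the paper's stated definition of $\strongperp$, and is accurate only at the level of ranges (equivalently, of the positive parts $|a^*|$), not at the level of the projective space on which the orthograph is defined. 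So neither your plan, nor any plan built on the range-only relation, can deliver the statement; what it actually yields is that the singular classes of $M_2$ decompose into infinite complete bipartite components indexed by pairs of orthogonal lines.
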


\begin{proof}
The first and the second assertion are obvious, whereas the algebra $M_2$ has been examined in \cite[Proposition 2.4]{ArambasicBJMA2020}.
\end{proof}

\begin{lemma}\label{p ili q minus p}
    Let $p,q\in A''$ be two different minimal projections. Then $q'=p\vee q-p$ is also minimal.
\end{lemma}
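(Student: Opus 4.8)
The plan is to prove that $q'$ is Murray--von Neumann equivalent to $q$, and then to observe that minimality is inherited by equivalent projections. First I would dispose of the trivial case: if $pq=0$ then $p\vee q=p+q$, whence $q'=q$ and there is nothing to prove; so assume $pq\neq0$. As noted in the proof of Lemma~\ref{VEE}, two distinct minimal projections satisfy $p\wedge q=0$.

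For the main step I would invoke the Kaplansky parallelogram law for projections in the von Neumann algebra $A''$: for any projections $e,f$ one has $e\vee f-f\sim e-e\wedge f$, the equivalence being implemented by the partial isometry $v$ from the polar decomposition of $(1-f)e$. With $e=q$, $f=p$ and $p\wedge q=0$ this yields a partial isometry $v\in A''$ with
$$v^*v=q-q\wedge p=q\qquad\text{and}\qquad vv^*=p\vee q-p=q'.$$
In particular $q'\neq0$ because $q\neq0$, and moreover $v=vq$ and $vqv^*=q'$.

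It then remains to see that minimality survives equivalence. By the definition of a minimal projection recalled just before Lemma~\ref{PhiMinBij} we have $qA''q=\mathbb Cq$. For arbitrary $x\in A''$, since $v=vq$ forces $v^*xv=q(v^*xv)q\in qA''q=\mathbb Cq$, I would write $v^*xv=\lambda q$ for a scalar $\lambda=\lambda(x)$ and compute
$$q'xq'=v(v^*xv)v^*=\lambda\,vqv^*=\lambda\,q'.$$
Hence $q'A''q'=\mathbb Cq'$, i.e.\ $q'$ is minimal, as desired.

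The only ingredient external to the excerpt is the parallelogram law, which is entirely standard for von Neumann algebras, so I expect the main difficulty to be merely expository: whether to quote it or to reprove the single equivalence $p\vee q-p\sim q$ directly from the polar decomposition of $(1-p)q$ (here $q(1-p)q=(1-\|pq\|^2)q$ has range projection $q$ because $\|pq\|<1$ by Lemma~\ref{VEE}, which already pins down $v^*v=q$). A more self-contained alternative I would keep in reserve is structural: pass to the corner $B=(p\vee q)A''(p\vee q)$, check that $p$ and $q$ remain minimal in $B$ with $p\vee q=1_B$, use central supports to show $B$ is a type I factor $\cong B(K)$, and conclude that two distinct rank-one projections with supremum $1_B$ force $\dim K=2$, so that $q'=1_B-p$ is again rank one, hence minimal. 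I would prefer the equivalence argument for brevity.
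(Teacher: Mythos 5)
Your proof is correct and follows essentially the same route as the paper's: both invoke Kaplansky's parallelogram law (using $p\wedge q=0$ for distinct minimal projections) to get a partial isometry $v$ with $v^*v=q$ and $vv^*=q'$, and then transport minimality through the computation $q'xq'=v(v^*xv)v^*\in\mathbb C\,q'$, i.e.\ $q'A''q'=vqA''qv^*=\mathbb C q'$. Your extra material --- the separate $pq=0$ case, the explicit check $q'\neq0$, the sketch of proving the equivalence from the polar decomposition of $(1-p)q$, and the type I corner alternative --- is harmless but not needed.
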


\begin{proof}
    As in Lemma \ref{VEE}, $p\wedge q=0$. By the famous Kaplansky's formula, we have $q'=p\vee q-p\sim q-p\wedge q=q$. So, there is a partial isometry $v$ such that $q=v^*v$ and $q'=vv^*$. Further, since $v=vv^{*}v$, we have $q'=vv^{*}vv^{*}=vqv^{*}$, and hence
    $$q'A''q'=vqv^* A'' vqv^*\subseteq vq A'' qv^*=v(\mathbb Cq)v^*=\mathbb C q'.$$
    Thus, $q'$ is minimal by definition.
\end{proof}

\begin{proposition}\label{uslovB}
    Let $A$ be a $C^*$-algebra not isomorphic to $M_1$, $M_1\oplus M_1$ or $M_2$, and let $p$, $q\in A''$ be two minimal projections. Then, there is another minimal projection $r\in A''$ such that $pr=qr=0$.
\end{proposition}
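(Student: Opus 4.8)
The plan is to find a minimal projection $r$ orthogonal to both $p$ and $q$ by producing a pure state $\omega$ that vanishes on the two rank-one "blocks" determined by $p$ and $q$, and then invoking Lemma \ref{ProjLE} (or Lemma \ref{PhiMinBij}) to pass from $\omega$ to its associated minimal projection $r$, which automatically satisfies $rp=rq=0$ once $\omega(p)=\omega(q)=0$. The algebraic heart is to control the "size" of the subspace that $p$ and $q$ occupy inside $A''$. By Lemma \ref{VEE}, $p\vee q$ is closed, and by Lemma \ref{p ili q minus p} the difference $q'=p\vee q-p$ is again minimal and orthogonal to $p$; thus $p\vee q=p+q'$ is a sum of two orthogonal minimal projections, so it has "rank two" in the appropriate sense. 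The whole task is therefore to find a minimal projection strictly outside the range of $p\vee q$.

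First I would reduce to the search for a pure state $\omega$ on $A$ with $\omega(p\vee q)=0$, i.e.\ a pure state whose associated minimal projection lies under $1-(p\vee q)$. Since $p\vee q$ is closed, its complement $e:=1-(p\vee q)$ is an open projection. The key step is to show $e\neq0$ under the stated hypothesis, and more precisely that $e$ dominates at least one minimal projection of $A''$. I expect this is exactly where the exclusion of $M_1$, $M_1\oplus M_1$ and $M_2$ enters: in those three algebras the enveloping algebra has "total rank" at most two in each summand, so $p\vee q$ can exhaust everything and no room is left for a third orthogonal minimal projection. For all other $C^*$-algebras, the universal representation is large enough that $e=1-p\vee q$ must support a further pure state.

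Concretely, I would argue as follows. If $e=0$ then $1=p\vee q=p+q'$ with $p,q'$ orthogonal minimal projections, forcing $A''$ to be two-dimensional as a von Neumann algebra on the relevant block; analysing the central decomposition of $A''$ then pins $A$ down to one of the three excluded algebras, a contradiction. Hence $e\neq0$. Since $e$ is a nonzero projection in the von Neumann algebra $A''$, it dominates some minimal projection $r$ of $A''$ (every nonzero projection in $A''$ majorises a minimal one, as $A''$ has an abundance of minimal projections by the discussion preceding Lemma \ref{PhiMinBij}). From $r\le e=1-(p\vee q)\le 1-p$ and $r\le 1-q$ we read off $rp=0$ and $rq=0$, and $r$ is minimal by construction, which is exactly the desired conclusion.

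The main obstacle I anticipate is the case analysis showing $e=1-(p\vee q)\neq0$: one must rule out precisely $M_1$, $M_1\oplus M_1$ and $M_2$ and nothing else, which requires understanding when the whole identity of $A''$ can be written as a sum of two orthogonal minimal projections. The cleanest route is to note that $e=0$ means the identity of $A''$ has rank two, hence $A''$ acts on a space that is at most two-dimensional over each minimal central projection; translating this back to $A$ via the structure of the universal representation yields that $A$ is finite-dimensional with very small blocks, and a short enumeration (using that $\dim M_n=n^2$ and that rank-two identity allows only $M_1\oplus M_1$, $M_2$, or the degenerate $M_1$) completes the exclusion. Everything else is routine once the existence of a minimal projection under a nonzero projection is granted.
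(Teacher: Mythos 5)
Your overall architecture is the same as the paper's (split on whether $p\vee q=1$; if not, find a minimal projection under the complement; if so, show $A$ is one of the three excluded algebras), but the crucial step contains a genuinely false claim. You assert that ``every nonzero projection in $A''$ majorises a minimal one'' because $A''$ has an abundance of minimal projections. This is not true: the enveloping von Neumann algebra $A''=A^{**}$ is in general not atomic. Take $A=C[0,1]$ and let $z\in A''$ be the central support of the GNS representation associated with Lebesgue measure, so that $zA''\cong L^\infty[0,1]$. By Lemma \ref{PhiMinBij}, minimal projections of $A''$ correspond to pure states of $A$, i.e.\ to point evaluations, and each such minimal projection lies under the central support of the corresponding one-dimensional representation; since the Lebesgue GNS representation is disjoint from every point evaluation, $z$ is a nonzero projection orthogonal to \emph{every} minimal projection of $A''$. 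So ``abundance'' of minimal projections does not mean they sit under every projection, and your key step, as justified, fails.

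The fix is exactly the ingredient you introduced and then never used: $e=1-(p\vee q)$ is not an arbitrary projection but an \emph{open} one (Lemma \ref{VEE}). If $e\neq0$, then by definition of open projection there is an increasing net of positive elements of $A$ with supremum $e$, hence some nonzero positive $a\in A$ with $a\le e$; Lemma \ref{ProjLE} then produces a minimal projection $r\le a/\|a\|\le e$, giving $rp=rq=0$. This is precisely the paper's argument, and it is where openness (i.e.\ closedness of $p\vee q$) earns its keep. Your treatment of the degenerate case $e=0$ is essentially the paper's as well, though sketchier: the paper makes the ``rank two'' intuition precise by writing $a=pap+paq'+q'ap+q'aq'$ (with $q'=p\vee q-p$ minimal by Lemma \ref{p ili q minus p}) and showing each summand is a scalar multiple of a fixed element, which yields an embedding of $A$ into $M_2(\mathbb C)$ and hence $A\cong M_1$, $M_1\oplus M_1$ or $M_2$; also note that $A''\cong M_2(\mathbb C)$ being ``two-dimensional'' is a misstatement, and passing from smallness of $A''$ back to $A$ uses that a weak-$*$ dense subspace of a finite-dimensional space is the whole space.
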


\begin{proof}
    Denote $s=p\vee q$. By Lemma \ref{VEE}, $s$ is closed.

    First, suppose that $s\neq 1=1_{A''}$. Then $1-s$ is nontrivial open projection, and hence, there is a net of positive elements in $A$ with supremum $1-s$. In particular, there is a nontrivial positive $a\in A$ such that $a\le 1-s$. By Lemma \ref{ProjLE}, there is a minimal $r\le a\le 1-s$. Hence $sr=0$.

    It remains to prove that if $s=1$, then $A$ must be isomorphic to one of mentioned three small algebras. So, suppose $s=1$; then $sAs=A$. By Lemma \ref{p ili q minus p}, $s=p+q'$, with $pq'=0$, and both $p$ and $q'$ are minimal. For every $a\in A$ we have
    \begin{equation}\label{A decompose}
    a=pap+q'ap+paq'+q'aq'.
    \end{equation}
    We shall prove that all of these four summands are scalar multiple of fixed elements. Let $\varphi_p$ and $\varphi_{q'}$ be the corresponding pure states. Then $pap=\varphi_p(a)p$ and $q'aq'=\varphi_{q'}(a)q'$. It remains to examine $q'ap$ and $paq'$. In $A''$ we have polar decomposition $q'ap=v|q'ap|$ for suitable partial isometry $v$. However, $|q'ap|^2=paq'ap\in pA''p\cong\mathbb C p$, and hence $|q'ap|\in \mathbb C p$. Thus $|q'ap|=\lambda p$ for some $\lambda\in\mathbb C$, and also $q'ap=\lambda v p$. Denote $vp$ ba $w$. It is easy to verify $w^*w=p$ and $ww^*=q'$. We prove that $w$ is unique up to a unimodular multiple. Indeed, suppose $u$ is another element such that $u^*u=p$ and $uu^*=q'$, and consider $u^*w$. Since $wp=up=p$, we have $u^*w\in pA''p$ and hence $u^*w=\lambda p$ for some unimodular $\lambda\in\mathbb C$. Therefore, any two partial isometries that connect $p$ and $q$ are scalar multiple of each other. Hence, $q'ap=\lambda vp=\lambda w$ is a scalar multiple of a fixed element regardless of $a$.

    Thus, decomposition (\ref{A decompose}) leads to a monomorphism from $A$ to $M_2(\mathbb C)$. It is isomorphism, if all four entries are nontrivial for some $a\in A$. In this case, $A\cong M_2$. If, however, $q'ap=paq'=0$ for all $a\in A$, then $A$ is isomorphic to the set of all diagonal $2\times 2$ matrices, i.e.\ to $M_1\oplus M_1$. Finally, if there is no two distinct minimal projections in $A$, then $A\cong M_1$.
\end{proof}

\begin{lemma}\label{Orth}
    Let $a$, $b\in A$ be positive elements of norm one, and let there is a projection $p$ (not necessarily minimal) such that $pa=p$ and $pb=0$. Then $b\strongp a$. If, additionally, there is another projection $q$ such that $qa=0$, $qb=q$ then $a\strongperp b$.
\end{lemma}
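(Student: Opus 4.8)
The statement to prove is Lemma~\ref{Orth}: given positive norm-one elements $a$, $b$ with a projection $p$ satisfying $pa=p$ and $pb=0$, we get $b\strongp a$; and if additionally there is $q$ with $qa=0$, $qb=q$, then $a\strongperp b$. Let me sketch my approach.

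My plan is to reduce $b\strongp a$ to an application of the earlier Lemma~\ref{lema}. Recall Lemma~\ref{lema} says that if there is a state $\rho$ with $\rho(xx^*)=\|x\|^2$ and $\rho(yy^*)=0$, then $x\strongp y$. Comparing with the notation here, to conclude $b\strongp a$ I want a state $\rho$ with $\rho(bb^*)=\|b\|^2=1$ and $\rho(aa^*)=0$. Since $a$, $b$ are positive, $bb^*=b^2$ and $aa^*=a^2$, so I need $\rho(b^2)=1$ and $\rho(a^2)=0$.

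The key step is to manufacture such a state out of the projection $p$. The projection $p$ lives in $A''$, but states extend to normal states on $A''$, so I can work there. First I would construct a state supported on $p$: take any normal state $\rho$ on $A''$ with $\rho(p)=1$ (equivalently $\rho(1-p)=0$, i.e.\ $\rho$ vanishes off the range of $p$), and restrict it to $A$. From $pb=0$ and $b\ge0$ I get $pb^2p=(pb)(bp)=0$, and since $\rho$ is carried by $p$ this forces $\rho(b^2)=\rho(pb^2p)=0$. Wait---I need to be careful about which element plays which role. For $b\strongp a$ via Lemma~\ref{lema} I need $\rho(bb^*)=1$ and $\rho(aa^*)=0$, so I actually want the state supported on $p$ to \emph{annihilate} $b$ and attain its norm on $a$. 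From $pa=p$ and $a\ge0$, $\|a\|=1$, one gets $pa^2p=p$ (using $pa=p$ hence $ap=p$ by taking adjoints, so $pa^2p=(pa)(ap)=p\cdot p=p$), giving $\rho(a^2)=1$; and from $pb=0$, $pb^2p=0$, giving $\rho(b^2)=0$. So the correct pairing is: the $p$-state witnesses $b\strongp a$, because it attains its norm on $a$ and kills $b$. Thus by Lemma~\ref{lema} (with the roles matched so that the norm-attaining element is $a$ and the annihilated element is $b$) we conclude $b\strongp a$.

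For the additional conclusion, the symmetric construction using $q$ yields $a\strongp b$: the projection $q$ satisfies $qb=q$ and $qa=0$, so a normal state carried by $q$ attains its norm on $b$ and annihilates $a$, and Lemma~\ref{lema} gives $a\strongp b$. Combining $b\strongp a$ and $a\strongp b$ gives $a\strongperp b$ by definition. The main obstacle, and the only genuinely nonroutine point, is the passage to $A''$ to obtain a \emph{state} from the projection: I must guarantee that a normal state with $\rho(p)=1$ exists and that its restriction to $A$ is still a state on $A$ with $\rho(a^2)=1$, $\rho(b^2)=0$ computed there. Since any normal state on $A''$ restricts to a state on $A$, and the values $\rho(a^2)$, $\rho(b^2)$ only involve elements of $A$, this restriction causes no difficulty; the identities $pa^2p=p$ and $pb^2p=0$ hold in $A''$ and suffice to pin the values down. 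This is why the earlier development of the enveloping algebra and normal extensions of states is exactly the tool needed.
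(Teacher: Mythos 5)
Your proof takes a genuinely different route from the paper's, and its mathematical content is essentially sound. The paper's proof is a one-line direct estimate: for any $c\in A$,
$$\|a+bc\|\ge\|p(a+bc)\|=\|pa\|=\|p\|=1=\|a\|,$$
using nothing but $pa=p$, $pb=0$ and $\|p\|\le1$; no states appear at all. You instead produce a normal state $\rho$ on $A''$ with $\rho(p)=1$, check $\rho(a^2)=\rho(pa^2p)=\rho(p)=1$ and $\rho(b^2)=\rho(pb^2p)=0$ (both computations are correct, as is the claim that the restriction of $\rho$ to $A$ is still a state --- this uses normality of $\rho$ together with the fact that an approximate unit of $A$ increases to $1_{A''}$), and then feed this state into Lemma \ref{lema}. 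This buys uniformity --- Lemma \ref{lema} becomes the single engine behind every orthogonality claim in the paper --- at the cost of invoking the enveloping-algebra machinery where the paper needs only the trivial inequality $\|px\|\le\|x\|$.

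The one defect is that you state the conclusions with the roles of $a$ and $b$ swapped. By Lemma \ref{lema} exactly as you quote it ($\rho(xx^*)=\|x\|^2$ and $\rho(yy^*)=0$ imply $x\strongp y$), your $p$-state, which attains its norm on $a$ and kills $b$, proves $a\strongp b$ --- not $b\strongp a$ as you assert; symmetrically, your $q$-state proves $b\strongp a$, not $a\strongp b$. You clearly sensed this tension mid-proof (``I need to be careful about which element plays which role'') but resolved it the wrong way. In your defence, the paper's own statement of the lemma contains the same swap: its displayed proof establishes $\|a+bc\|\ge\|a\|$ for all $c$, which by the paper's Definition is $a\strongp b$, while the statement announces $b\strongp a$; and the announced version is in fact false under these hypotheses (in $M_2(\mathbb C)$ take $a$ the identity, $b=e_{22}$, $p=e_{11}$, $c=-e_{22}$: then $pa=p$, $pb=0$, but $\|b+ac\|=0<\|b\|$). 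So the first assertion should read $a\strongp b$; your computations prove exactly that, and the mutual statement $a\strongperp b$ --- the only thing used later, in Theorem \ref{Ocena dijametra} --- is unaffected, since your two states between them supply both directions.
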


\begin{remark}
The previous lemma is valid for both $p\in A$ and $p\in A''$ by Lemma \ref{Ambiental}.
\end{remark}

\begin{proof}
    For any $c\in A$ we have
    $$\|a+bc\|\ge\|p(a+bc)\|=\|pa\|=\|p\|=1=\|a\|.$$
    The second part follows directly from the first.
\end{proof}

\begin{theorem}\label{Ocena dijametra}
    Let $A$ be a $C^*$-algebra not isomorphic to $M_1$, $M_1\oplus M_1$ or $M_2$. Then all elements that are not approximately invertible make a single connected component of its orthograph. Moreover, the diameter of this component does not exceed $4$.
\end{theorem}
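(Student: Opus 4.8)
The plan is to reduce everything to positive elements, and then to connect any two of them by an explicit path of four edges whose three interior vertices are manufactured with the noncommutative Urysohn lemma.

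\emph{Reduction to positive elements.} By Lemma~\ref{modul*} we have $a\strongp c\iff|a^*|\strongp c$ and $c\strongp a\iff c\strongp|a^*|$, so the vertices $[a]$ and $[|a^*|]$ have exactly the same neighbours in $\Gamma(A)$; moreover $a$ is approximately right invertible iff $|a^*|$ is, since $\rho(aa^*)=\rho(|a^*|^2)$ for every state (Proposition~\ref{Esm2}). Hence it suffices to join the positive norm-one elements $|a^*|$ and $|b^*|$ by a path of length at most $4$; replacing the two endpoints of that path by $[a]$ and $[b]$ then gives a path of the same length between $[a]$ and $[b]$. From now on I assume $a,b\ge 0$, $\|a\|=\|b\|=1$, neither approximately right invertible.

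\emph{Extracting projections.} Since $a$ is not approximately right invertible, Proposition~\ref{Esm2} furnishes a pure state $\rho$ with $\rho(a^2)=0$; its minimal projection $p_a\in A''$ (Lemma~\ref{PhiMinBij}) satisfies $p_aa^2p_a=0$, so $(ap_a)^*(ap_a)=0$ and $p_aa=0$ by the $C^*$-identity. Lemma~\ref{ProjLE} supplies a minimal $q_a\le a$ with $aq_a=q_a$, and then $p_aq_a=p_a(aq_a)=(p_aa)q_a=0$. I define $p_b,q_b$ analogously, so that $p_bb=0$, $bq_b=q_b$, $p_bq_b=0$. The decisive step is to apply Proposition~\ref{uslovB} (this is the only place the exclusion of $M_1$, $M_1\oplus M_1$, $M_2$ is used) to the pair $p_a,p_b$, obtaining a minimal projection $r\in A''$ with $rp_a=rp_b=0$.

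\emph{Building the path.} Minimal projections are compact, and by Lemma~\ref{VEE} the suprema $r\vee q_a$, $p_a\vee p_b$, $r\vee q_b$ are closed and, by its second part, orthogonal to $p_a$, $r$, $p_b$ respectively. Thus the noncommutative Urysohn Theorem~\ref{Urysohn} produces $e_0,e_1,e_2\in A$ with $0\le e_i\le 1$ satisfying $e_0p_a=p_a$, $e_0(r\vee q_a)=0$; $e_1r=r$, $e_1(p_a\vee p_b)=0$; and $e_2p_b=p_b$, $e_2(r\vee q_b)=0$. Each $e_i$ fixes a nonzero projection, hence is a genuine vertex. I would then read off the four edges of $a-e_0-e_1-e_2-b$ straight from Lemma~\ref{Orth}: for $a\strongperp e_0$ use the projections $q_a$ (where $a$ peaks, $e_0$ vanishes) and $p_a$ (where $a$ vanishes, $e_0$ peaks); for $e_0\strongperp e_1$ use $p_a$ and $r$; for $e_1\strongperp e_2$ use $r$ and $p_b$; for $e_2\strongperp b$ use $q_b$ and $p_b$. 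Every interior vertex is non-isolated, hence not approximately right invertible (Propositions~\ref{IsoNonUni1} and \ref{isolatedNONunital}), so the path stays inside the prescribed set; as approximately right invertible elements are isolated, that set is exactly one connected component, of diameter at most $4$.

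\emph{Main obstacle.} A single intermediate vertex placed between a neighbour of $a$ and a neighbour of $b$ is forced to peak on $q_a\vee q_b$ and vanish on $p_a\vee p_b$, which requires the cross terms $q_ap_b$ and $q_bp_a$ to vanish; these are not controlled, and this is precisely why two edges do not suffice. The point of the fresh projection $r$, orthogonal to both $p_a$ and $p_b$ by Proposition~\ref{uslovB}, is to break this coupling: the construction uses only the consecutive orthogonalities $p_a\perp r\perp p_b$ and never $p_a\perp p_b$, so the two sides are joined through $e_1$ with no cross condition at all.
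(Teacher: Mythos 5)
Your proposal is correct and follows essentially the same route as the paper's proof: reduce to positive norm-one elements via Lemma \ref{modul*}, extract a minimal ``peak'' projection and a minimal ``vanishing'' projection for each of $a$ and $b$, invoke Proposition \ref{uslovB} on the two vanishing projections to obtain a third minimal projection $r$, then manufacture the three interior vertices with the noncommutative Urysohn Theorem \ref{Urysohn} (together with Lemma \ref{VEE}) and read off the four edges from Lemma \ref{Orth}. The only differences are notational (your $p$'s and $q$'s are the paper's $q$'s and $p$'s) and that you make explicit a few points the paper leaves implicit, such as the interior vertices being non-isolated and hence not approximately right invertible.
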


\begin{proof}
    Let $a$, $b\in A$ be elements that are not approximately invertible. Without loss of generality we assume $a,b\geq0$ (Lemma \ref{modul*}) and $\|a\|=\|b\|=1$. By Proposition \ref{Esm2}, there are pure states $\varphi_a$, $\varphi_b$ that annihilates $a^2$, $b^2$ respectively. Denote their support projections by $q_a$ and $q_b$.

    Also, let $p'_a$, $p'_b$ stands for spectral projections of $a$, $b$ corresponding to the closed set $\{1\}$. All $p'_a$, $p'_b$, $q_a$, $q_b$ are closed, and moreover $q$'s are minimal. Also $p'_aq_a=0$, $p'_bq_b=0$.

    We can reduce $p'_a$ and $p'_b$ to smaller, and moreover minimal projections $p_a$, $p_b$ with the same property: $p_aa=p_a$, $p_bb=p_b$ and $p_aq_a=p_bq_b=0$. Indeed, there is a pure state, say $\psi$ which attains its norm on $a$, i.e.\ $\psi(a)=1$. Let $p_a$ be the minimal projection that corresponds to $\psi$. We have $p_aap_a=p_a$ which implies $p_a\le a$ (see the proof of Lemma \ref{ProjLE}).

    By Proposition \ref{uslovB}, there is another minimal projection $r$ such that $q_ar=q_br=0$. It is obvious that in $A''$ we have
    $$p_a\strongperp q_a\strongperp r\strongperp q_b\strongperp p_b.$$
    However this relation is the resident of $A''$ and it should be smoothened to be back in $A$.

    We construct elements $c_1$, $c_2$ and $c_3\in A$ using a noncommutative version of Urysohn's lemma, Lemma \ref{Urysohn}. Since $q_a$ is compact by construction and $p_a\vee r$ is closed by Lemma \ref{VEE} there is $c_1\in A$ such that $c_1q_a=q_a$ and $c_1(p_a\vee r)=0$, i.e.\ $c_1p_a=c_1r=0$. In a similar way we can construct $c_2$ such that $c_2r=r$, $c_2q_a=c_2q_b=0$ and $c_3$ for which $c_3q_b=q_b$, $c_3r=c_3p_b=0$.

    By Lemma \ref{Orth} we have
    $$a\strongperp c_1\strongperp c_2\strongperp c_3\strongperp b.$$
\end{proof}

\section{Diameter estimation of direct sum of $C^{*}$-algebras}\label{finite case}

Let $A$ be a $C^*$-algebra. By $PS(A)$ we denote the space of all pure states on $A$. For two states $\rho:A\to\mathbb{C}$ and $\tau:B\to\mathbb{C}$ on $C^{*}$-algebras $A$ and $B$ we define a mapping $\rho\oplus\tau:A\oplus B\to\mathbb{C}$ by $$(\rho\oplus\tau)(a\oplus b)=\rho(a)+\tau(b).$$
It is easy to prove that if $\rho$ and $\tau$ are states (on $A$ and $B$, respectively), the same is true for $\rho\oplus0$ and $0\oplus\tau$ (on $A\oplus B$).

\begin{lemma}
    Let $A$ and $B$ be $C^*$-algebras, and let $C=A\oplus B$. Then $$PS(C)=(PS(A)\oplus 0)\cup(0\oplus PS(B)).$$
\end{lemma}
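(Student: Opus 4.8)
The plan is to establish the set equality by proving both inclusions, the substantial one being $PS(C)\subseteq(PS(A)\oplus 0)\cup(0\oplus PS(B))$. I would first analyse the coordinate structure of an arbitrary state, use it to force a pure state into one of the two listed forms, and then settle separately the equivalence between purity of a state of $C$ of the form $\rho\oplus 0$ and purity of its $A$-coordinate $\rho$ (and symmetrically for $0\oplus\tau$). Throughout, the engine is a simple convexity argument; the only analytic input is the behaviour of norms of positive functionals under an approximate unit.

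First I would record the general structure of a state $\omega$ on $C=A\oplus B$. Setting $\rho(a):=\omega(a\oplus 0)$ and $\tau(b):=\omega(0\oplus b)$ produces positive linear functionals on $A$ and $B$, since $\rho(a^*a)=\omega\bigl((a\oplus 0)^*(a\oplus 0)\bigr)\ge 0$ and likewise for $\tau$, and clearly $\omega(a\oplus b)=\rho(a)+\tau(b)$. The key quantitative fact is $\|\rho\|+\|\tau\|=1$. To obtain it, pick approximate units $(e_i)$ and $(f_j)$ for $A$ and $B$; then $(e_i\oplus f_j)$ is an approximate unit for $C$, and since the norm of a positive functional equals the limit of its values along an approximate unit,
$$1=\|\omega\|=\lim_{i,j}\omega(e_i\oplus f_j)=\lim_i\rho(e_i)+\lim_j\tau(f_j)=\|\rho\|+\|\tau\|.$$

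Now put $\lambda:=\|\rho\|$, so $\|\tau\|=1-\lambda$. If $0<\lambda<1$, then $\rho/\lambda$ and $\tau/(1-\lambda)$ are states on $A$ and $B$ respectively, and
$$\omega=\lambda\bigl((\rho/\lambda)\oplus 0\bigr)+(1-\lambda)\bigl(0\oplus(\tau/(1-\lambda))\bigr)$$
exhibits $\omega$ as a nontrivial convex combination of two distinct states of $C$, so $\omega$ is not pure. Hence a pure state of $C$ must have $\lambda=1$ or $\lambda=0$, i.e.\ it equals $\rho\oplus 0$ with $\rho$ a state on $A$, or $0\oplus\tau$ with $\tau$ a state on $B$. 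It remains only to verify that the surviving coordinate is itself pure.

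For this last equivalence I would argue by convexity in both directions. If $\rho\oplus 0$ is pure while $\rho=\tfrac12(\rho_1+\rho_2)$ for states $\rho_1,\rho_2$ on $A$, then $\rho\oplus 0=\tfrac12(\rho_1\oplus 0)+\tfrac12(\rho_2\oplus 0)$, so purity forces $\rho_1\oplus 0=\rho_2\oplus 0$, hence $\rho_1=\rho_2=\rho$ and $\rho$ is pure. Conversely, if $\rho$ is pure and $\rho\oplus 0=\tfrac12(\omega_1+\omega_2)$ with $\omega_1,\omega_2$ states on $C$, writing each $\omega_k$ in coordinate form as above its $B$-coordinate $\tau_k\ge0$ satisfies $0=\tfrac12(\tau_1+\tau_2)$, forcing $\tau_1=\tau_2=0$; thus each $\omega_k$ equals $\rho_k\oplus 0$ with $\rho_k$ a state on $A$ and $\rho=\tfrac12(\rho_1+\rho_2)$, so purity of $\rho$ gives $\omega_1=\omega_2=\rho\oplus 0$. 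The symmetric statements for $0\oplus\tau$ are identical, and together with the reverse inclusion (which is precisely the displayed ``converse'' directions) they yield the claimed equality. The only mildly delicate point is the norm identity $\|\rho\|+\|\tau\|=1$ in the possibly nonunital setting, which is exactly where the approximate-unit characterization of the norm of a positive functional is required.
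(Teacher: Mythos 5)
Your proof is correct and follows essentially the same route as the paper's: restrict a state of $C=A\oplus B$ to its two coordinate functionals, use the norm identity $\|\rho\|+\|\tau\|=1$ to exhibit any state with both coordinates nonzero as a nontrivial convex combination, and conclude by extremality. You are in fact slightly more thorough than the paper, since you justify the norm identity via approximate units (covering the nonunital case) and explicitly verify that the surviving coordinate of a pure state on $C$ is itself pure, a step the paper leaves implicit.
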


\begin{proof}
    First, we prove that if $\rho$ is a pure state on $A$ then $\rho\oplus0$ is a pure state on $C$. Assume  $\rho\oplus0=\lambda\varphi+(1-\lambda)\psi$ for some $\lambda\in(0,1)$ and some states $\varphi$, $\psi$ on $C$. Define $\varphi_1:A\to\mathbb{C}$ and $\psi_1:A\to\mathbb{C}$ by $\varphi_1(a)=\varphi(a\oplus0)$ and $\psi_1(a)=\psi(a\oplus0)$. Similarly, define $\varphi_2:B\to\mathbb{C}$ and $\psi_2:B\to\mathbb{C}$ by $\varphi_2(b)=\varphi(0\oplus b)$ and $\psi_2(b)=\psi(0\oplus b)$. All these mappings are positive functionals, and it holds
    $$\rho=\lambda\varphi_1+(1-\lambda)\psi_1\quad \text{and}\quad 0=\lambda\varphi_2+(1-\lambda)\psi_2.$$
    Obviously, $\varphi_2=\psi_2=0$ and hence $\|\varphi_1\|=\|\psi_1\|=1$. Therefore $\varphi_1$ and $\psi_1$ are states. Since $\rho$ is pure, we have $\varphi_1=\psi_1=\rho$.

    Similarly, we conclude that $0\oplus\tau$ is a pure state on $C$ if $\tau$ is pure on $B$.

    Now suppose that $\rho$ is a pure state on $C$. Define $\rho_1:A\to\mathbb{C}$ and $\rho_2:B\to\mathbb{C}$ by $\rho_1(a)=\rho(a\oplus0)$ and $\rho_2(b)=\rho(0\oplus b)$. Suppose
    \begin{equation}\label{5.1suprotno}
    \|\rho_1\oplus0\|\neq0,\qquad\|0\oplus\rho_2\|\neq0.
    \end{equation}
    Then we can write
    %
    %
    $$\rho=\|\rho_1\oplus0\|\left(\frac{1}{\|\rho_1\oplus0\|}(\rho_1\oplus0)\right)+\|0\oplus\rho_2\|\left(\frac{1}{\|0\oplus\rho_2\|}(0\oplus\rho_2)\right).$$
    Note that the functionals in brackets are of norm one -- hence states. Also, from $1=\|\rho\|$ and $\rho(a\oplus b)=\rho(a\oplus0)+\rho(0\oplus b)=\rho_1(a)+\rho_2(b)$, we see that $\|\rho_1\oplus0\|+\|0\oplus\rho_2\|=1$, so by definition of pure state, exactly one of $\|\rho_1\oplus0\|$ and $\|0\oplus\rho_2\|$ is equal to zero, which violates (\ref{5.1suprotno}). Then either $\|\rho_1\oplus0\|=0$ and hence $\rho=0\oplus\rho_2$, or $\|0\oplus\rho_2\|=0$ and consequently $\rho=\rho_1\oplus0$.
\end{proof}

\begin{lemma}
    If $a_1\strongperp a_2$ and $b_1\strongperp b_2$ then $(a_1,b_1)\strongperp(a_2,b_2)$. In particular
    \begin{equation}\label{a00b}
        (a,0)\strongperp(0,b)
    \end{equation}
    whenever $a$, $b\neq0$.
\end{lemma}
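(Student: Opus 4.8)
The plan is to reduce everything to the definition of strong $BJ$-orthogonality, exploiting two structural facts about the direct sum $C=A\oplus B$: multiplication is componentwise, so $(a_2,b_2)(c,d)=(a_2c,b_2d)$, and the $C^*$-norm is the maximum, $\|(a,b)\|=\max\{\|a\|,\|b\|\}$. Everything then follows by transferring the two componentwise inequalities coming from the hypotheses into a single inequality for the max-norm.

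First I would establish the one-sided relation $(a_1,b_1)\strongp(a_2,b_2)$. Fix an arbitrary $(c,d)\in C$. Then
$$(a_1,b_1)+(a_2,b_2)(c,d)=(a_1+a_2c,\;b_1+b_2d),$$
so that
$$\|(a_1,b_1)+(a_2,b_2)(c,d)\|=\max\{\|a_1+a_2c\|,\;\|b_1+b_2d\|\}.$$
The hypothesis $a_1\strongp a_2$ gives $\|a_1+a_2c\|\ge\|a_1\|$, and $b_1\strongp b_2$ gives $\|b_1+b_2d\|\ge\|b_1\|$. Since $x\ge u$ and $y\ge v$ imply $\max\{x,y\}\ge\max\{u,v\}$, we get
$$\max\{\|a_1+a_2c\|,\;\|b_1+b_2d\|\}\ge\max\{\|a_1\|,\|b_1\|\}=\|(a_1,b_1)\|.$$
As $(c,d)$ was arbitrary, this is precisely $(a_1,b_1)\strongp(a_2,b_2)$.

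The reverse relation $(a_2,b_2)\strongp(a_1,b_1)$ follows by the identical computation after interchanging the indices $1\leftrightarrow2$, now using the other halves $a_2\strongp a_1$ and $b_2\strongp b_1$ of the two mutual orthogonalities. Combining the two gives $(a_1,b_1)\strongperp(a_2,b_2)$. For the special case (\ref{a00b}) I would specialize to $a_1=a$, $b_1=0$, $a_2=0$, $b_2=b$; here the required summand-orthogonalities are automatic, since $a\strongperp0$ and $0\strongperp b$ hold for every $a$ and $b$ (because $\|a+0\cdot c\|=\|a\|$ and $\|0+xc\|\ge0=\|0\|$ trivially for all $c$). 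Applying the first part yields $(a,0)\strongperp(0,b)$, the assumption $a,b\neq0$ only ensuring that $(a,0)$ and $(0,b)$ are genuine nonzero vertices of the orthograph.

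I expect no serious obstacle: the argument is driven entirely by the max-norm structure of the direct sum, and the single point requiring a moment's care is the monotonicity of the maximum in each coordinate, which is exactly what converts the two componentwise estimates into the one inequality needed for $C$.
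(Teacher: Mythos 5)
Your proof is correct and follows essentially the same route as the paper's: componentwise multiplication in $A\oplus B$, the max-norm identity, the two componentwise estimates from the hypotheses, and symmetry in the indices for the reverse direction, with the special case $(a,0)\strongperp(0,b)$ reduced to the trivial orthogonality against $0$. No gaps; your explicit remark on the monotonicity of the maximum is a minor elaboration of what the paper leaves implicit.
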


\begin{proof}
Let $c,d\in A$. Then
\begin{equation}\nonumber
    \begin{split}
    \|(a_1,b_1)+(a_2,b_2)(c,d)\|&=\|(a_1+a_2c,b_1+b_2d)\|\\
    &=\max\{\|a_1+a_2c\|,\|b_1+b_2d\|\}\\
    &\geq\max\{\|a_1\|,\|b_1\|\}=\|(a_1,b_1)\|,
    \end{split}
\end{equation}
because $\|a_1+a_2c\|\geq\|a_1\|$ and $\|b_1+b_2d\|\geq\|b_1\|$ for every $c,d\in A$. So, we proved that $(a_1,b_1)\strongp(a_2,b_2)$ and the other direction is proved analogously.

To prove (\ref{a00b}) note that $a\strongperp0$ for any $a$. (Usually, $0$ is excluded from the orthograph, since otherwise everything will be connected. However, pairs $(a,0)$, $(0,b)\neq0$ for $a$, $b\neq0$.)
\end{proof}

\begin{remark}\label{ImpRed}
   Note that if $(a_1,b_1)\strongperp(a_2,b_2)$ it need not be neither $a_1\strongperp a_2$ nor $b_1\strongperp b_2$. Indeed,  let $I$ and $P$ be, respectively, unit matrix and projection of rank one in $M_2(\mathbb{C})$. Then $(I,P)\strongperp(P,I)$, but $I\strongperp P$ is not true, since $I\strongp P$ and $P\not\strongp I$.
\end{remark}

\begin{lemma} Let $A$ and $B$ be (nontrivial) $C^*$-algebras, and let $C=A\oplus B$. Then
\begin{enumerate}
\item\label{Lema2} $A\oplus B$ is unital if and only if both $A$ and $B$ are unital.

\item\label{Lema3} Let $A$ and $B$ are unital. $(a,b)$ is not right invertible in $A\oplus B$ if and only if at least one of $a$, $b$ is not right invertible in $A$, $B$ respectively.

\item\label{Lema4} $(a,b)$ is not approximately right invertible in $A\oplus B$ if and only if at least one of $a$, $b$ is not approximately right invertible in $A$, $B$ respectively.
\end{enumerate}
\end{lemma}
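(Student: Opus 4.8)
The plan is to dispatch the three assertions in order of increasing difficulty, reserving the real work for the last. For (\ref{Lema2}), I would simply check the two directions by inspection: if $A$ and $B$ have units $1_A$, $1_B$, then $(1_A,1_B)$ is plainly a unit for $A\oplus B$ since $(1_A,1_B)(a,b)=(a,b)=(a,b)(1_A,1_B)$; conversely, if $(e,f)$ is a unit for $A\oplus B$, then testing it against elements of the form $(a,0)$ and $(0,b)$ gives $ea=ae=a$ and $fb=bf=b$, so $e$ and $f$ are units for $A$ and $B$ respectively.

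For (\ref{Lema3}) I would prove the equivalent contrapositive, namely that $(a,b)$ is right invertible if and only if both $a$ and $b$ are right invertible, and then negate. Right invertibility is a componentwise condition: if $(c,d)$ satisfies $(a,b)(c,d)=(1_A,1_B)$ then reading off the two coordinates gives $ac=1_A$ and $bd=1_B$; conversely, if $c$ and $d$ are right inverses of $a$ and $b$, then $(c,d)$ is a right inverse of $(a,b)$. Here the unit $(1_A,1_B)$ supplied by part (\ref{Lema2}) is exactly what makes the notion well posed.

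The substance lies in (\ref{Lema4}), where I would combine the pure-state characterization of approximate right invertibility from Proposition \ref{Esm2}(iii) with the description of $PS(C)$ established in the preceding lemma. Recall that $(a,b)$ is approximately right invertible if and only if $\sigma\bigl((a,b)(a,b)^*\bigr)>0$ for every pure state $\sigma$ on $C$, and that $(a,b)(a,b)^*=(aa^*,bb^*)$. Since every pure state on $C$ is of the form $\rho\oplus0$ with $\rho\in PS(A)$ or $0\oplus\tau$ with $\tau\in PS(B)$, evaluation gives
$$
(\rho\oplus0)(aa^*,bb^*)=\rho(aa^*),\qquad (0\oplus\tau)(aa^*,bb^*)=\tau(bb^*).
$$
Hence $\sigma\bigl((a,b)(a,b)^*\bigr)>0$ for all pure $\sigma$ on $C$ precisely when $\rho(aa^*)>0$ for all pure $\rho$ on $A$ and $\tau(bb^*)>0$ for all pure $\tau$ on $B$, that is, precisely when both $a$ and $b$ are approximately right invertible. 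Negating this equivalence yields (\ref{Lema4}).

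The main, and essentially the only, obstacle is bookkeeping: one must take the characterization via $\rho(aa^*)$ applied to the element $(a,b)(a,b)^*$ rather than to $(a,b)$ itself, and keep the negations of the equivalences straight so that ``both factors approximately right invertible'' correctly dualizes to ``at least one factor fails to be.'' Once the two cited results—Proposition \ref{Esm2} and the structure of $PS(A\oplus B)$—are invoked, each implication reduces to a one-line computation.
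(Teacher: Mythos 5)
Your proposal is correct and follows essentially the same route as the paper: parts (\ref{Lema2}) and (\ref{Lema3}) are dispatched as routine componentwise checks (the paper simply calls them trivial), and part (\ref{Lema4}) is proved by combining Proposition \ref{Esm2}(iii) with the description of $PS(A\oplus B)$ from the preceding lemma, using $(a,b)(a,b)^*=(aa^*,bb^*)$. The only cosmetic difference is that you state the equivalence positively (both components approximately right invertible) and negate at the end, whereas the paper argues directly with the existence of an annihilating pure state; the underlying computation is identical.
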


\begin{proof} (\ref{Lema2}) and (\ref{Lema3}) are trivial, so we give a proof only for (\ref{Lema4}).

If $(a,b)$ is not approximately right invertible, then by Proposition \ref{Esm2} there exist pure state $\rho$ such that $\rho((a,b)(a,b)^*)=0$. By characterization of pure states in previous lemma, there exists a pure state $\rho_1$ on $A$, or a pure state $\rho_2$ on $B$ such that $(\rho_1\oplus0)((a,b)(a,b)^*)=0$ or $(0\oplus\rho_2)((a,b)(a,b)^*)=0$. So, $\rho_1(aa^*)=0$ or $\rho_2(bb^*)=0$ and then at least one of $a$ and $b$ is not approximately right invertible by Proposition \ref{Esm2}.

For the other direction, suppose $a$ is not approximately right invertible. Then there exists a pure state $\rho_1$ on $A$ such that $\rho_1(aa^*)=0$. But than $(\rho_1\oplus0)((a,b)(a,b)^*)=(\rho_1\oplus0)((aa^*,bb^*))=0$. However $\rho_1\oplus0$ is pure; hence $(a,b)$ is not approximately right invertible.
\end{proof}

\begin{proposition} Let $A$ and $B$ be (nontrivial) $C^*$-algebras not isomorphic to $M_2(\mathbb C)$, and let $C=A\oplus B$. Then, there is only one nontrivial connected component in the orthograph of $C$, and for its diameter $\diam(C)$ there holds
$$\diam(C)\le\max\{\diam(A),\diam(B),3\}.$$
\end{proposition}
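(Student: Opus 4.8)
The plan is to connect any two non-isolated vertices $u=(a_1,b_1)$ and $v=(a_2,b_2)$ of $\Gamma(C)$ by an explicit short path, using the ``pure'' vertices $(a,0)$ and $(0,b)$ as a connecting hub. First I would record the combinatorics of this hub. Every $(a,0)$ with $a\neq0$ and every $(0,b)$ with $b\neq0$ is non-isolated: since $0$ is never approximately right invertible, Lemma \ref{Lema4} shows the pair is not approximately right invertible. By (\ref{a00b}) every $(a,0)$ is adjacent to every $(0,b)$, so any two hub vertices lie at distance at most $2$ (two vertices on the same side are joined through any single vertex on the other side, and vertices on opposite sides are adjacent). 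Thus the hub is connected with diameter at most $2$, and it is nonempty because $A$ and $B$ are nontrivial.

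Next I would show every non-isolated vertex lies within distance $1$ of the hub. The preceding orthogonality lemma for direct sums gives $(a_1,b_1)\strongperp(a_1',0)$ whenever $a_1\strongperp a_1'$, since the second coordinate only needs $b_1\strongperp0$, which always holds. If $a_1$ is not approximately right invertible in $A$, Proposition \ref{isolatedNONunital} supplies a nonzero $a_1'$ with $a_1\strongperp a_1'$, so $u$ is adjacent to the $A$-side vertex $(a_1',0)$; the case $a_1=0$ is covered directly by (\ref{a00b}). Symmetrically, if $b_1$ is not approximately right invertible in $B$, then $u$ is adjacent to a $B$-side vertex. Because $u$ is non-isolated, Lemma \ref{Lema4} guarantees at least one of these reductions is available. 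Together with the hub bound, this already proves that the non-isolated vertices form a single connected component and gives the crude estimate $\diam(C)\le 1+2+1=4$.

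To sharpen this to $\max\{\diam(A),\diam(B),3\}$ I would split into cases according to which reductions are available. If $u$ reduces to one side and $v$ to the other, say $u\strongperp(a_1',0)$ and $(0,b_2')\strongperp v$, then $(a_1',0)\strongperp(0,b_2')$ by (\ref{a00b}) yields a path of length at most $3$. The remaining case is when both endpoints reduce only to the same side; then at least one coordinate of each is approximately right invertible, forcing $a_1,a_2$ (say) to be \emph{not} approximately right invertible, hence genuine vertices of $\Gamma(A)$. Here, instead of passing twice through the hub, I would lift a geodesic of $\Gamma(A)$: by Theorem \ref{Ocena dijametra} there is a path $a_1\strongperp x_1\strongperp\cdots\strongperp x_{k-1}\strongperp a_2$ in $\Gamma(A)$ with $1\le k\le\diam(A)$, and lifting it to $(a_1,b_1)\strongperp(x_1,0)\strongperp\cdots\strongperp(x_{k-1},0)\strongperp(a_2,b_2)$ gives a path of length $k\le\diam(A)$, each edge being valid since the vanishing second coordinates are orthogonal to everything. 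The symmetric situation yields $\le\diam(B)$, and the degenerate subcases ($a_1\sim a_2$, or some $a_i=0$) are dispatched by a length-$2$ detour $u\strongperp(a_i',0)\strongperp v$ through the hub.

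The main obstacle is exactly this last, same-side case, and it is where the hypothesis $A,B\not\cong M_2$ is used: the lift succeeds only because the relevant non-isolated elements of $\Gamma(A)$ all lie in one connected component, which Theorem \ref{Ocena dijametra} guarantees for every admissible algebra but which fails for $M_2$, whose nontrivial components contain only two elements. I would therefore take care to verify that the endpoints truly inhabit that single component and that the degenerate coordinates do not break the lifted path, after which the bound $\diam(C)\le\max\{\diam(A),\diam(B),3\}$ follows in all cases.
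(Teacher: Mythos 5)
Your argument is correct and is essentially the paper's own proof in different packaging: both reduce each non-isolated endpoint to a coordinate that is not approximately right invertible, use the edges $(a,0)\strongperp(0,b)$ to cross between summands (giving the length-$3$ path $(a_1,b_1)\strongperp(a',0)\strongperp(0,b')\strongperp(a_2,b_2)$ in the mixed case), and in the same-side case lift a path from $\Gamma(A)$ (resp.\ $\Gamma(B)$) by padding with zeros in the other coordinate, handling degenerate coordinates by short detours. The only slight imprecision is citing Theorem \ref{Ocena dijametra} for the single-component property of the summand: that theorem excludes $M_1$ and $M_1\oplus M_1$ as well, so for those summands one must instead invoke the paper's proposition on the small algebras (or the remark following the proof) --- the same looseness that is present in the paper's own phrase ``it has only one nontrivial connected component.''
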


\begin{proof} Let $(a_1,b_1)$, $(a_2,b_2)\in C$ be non isolated points, hence non approximately right invertible. Then at least one of $a_1$, $b_1$ is not approximately right invertible, as well at least one of $a_2$, $b_2$. Let us consider the following possibilities:

1. Let $a_1$, $b_2$ not be approximately right invertible. Suppose $a_1\neq0$, $b_2\neq0$. Then $a_1$, $b_2$ are not isolated points in $A$, $B$ respectively. Therefore, there are $a'\in A$ and $b'\in B$ such that $a_1\strongperp a'$ and $b'\strongperp b_2$. Then we have the following path of length $3$:
\begin{equation}\label{PathAB}
(a_1,b_1)\strongperp(a',0)\strongperp(0,b')\strongperp(a_2,b_2).
\end{equation}

Note that nothing is lost if one or both of $a_1$, $b_2$ is equal to $0$, i.e.\ the same path (\ref{PathAB}) holds (for arbitrary $a'$ and/or $b'$), and even more, it can be shortened to $(0,b_1)\strongperp(a_2,0)$ if $a_1=0$, $b_2=0$.

2. Let $b_1$ and $b_2$ not be approximately right invertible, and suppose $0\neq b_1\neq b_2\neq0$. Since $B$ is not isomorphic to $M_2(\mathbb C)$, it has only one non trivial connected component. Thus we have a path of length $k\le\diam(B)$ in $B$:
$$b_1\strongperp b^{(1)}\strongperp\dots\strongperp b^{(k-1)}\strongperp b_2.$$
Then there is a path in $A\oplus B$
$$(a_1,b_1)\strongperp(0,b^{(1)})\strongperp\dots\strongperp(0,b^{(k-1)})\strongperp(a_2,b_2).$$
that connects $(a_1,b_1)$ and $(a_2,b_2)$, also of length $k$.

If $b_1=b_2$ or $b_1=0$ or $b_2=0$, or all of them, then $b_1$ and $b_2$ can be connected by a path of length $2$ or $1$. If $b_1=b_2$, we have $b_1\strongperp b'\strongperp b_2=b_1$ for some $b'\in B$, $b'\strongperp b_1$, whether $b_1=0$ or not. If $b_1=0$ then $b_1=0\strongperp b_2$ and similarly for $b_2=0$.

3. The possibility $a_1$, $a_2$ are not approximately invertible in $A$ is considered similarly.
\end{proof}

\begin{remark}
Note that the previous proof works even if one summand is $M_1\equiv\mathbb C$. Indeed, if $C=A\oplus M_1$, then $b$'s can be either $0$ or invertible, and the proof is perfectly fine.
\end{remark}

\begin{corollary} Let $A=M_{n_1}(\mathbb C)\oplus M_{n_2}(\mathbb C)\oplus\dots\oplus M_{n_m}(\mathbb C)$ be finite dimensional $C^{*}$-algebra, such that $n_j\neq2,3$ for all $1\le j\le m$. Then the diameter of its orthograph is at most $3$.
\end{corollary}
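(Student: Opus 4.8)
The plan is to argue by induction on the number $m$ of matrix summands, using the Proposition immediately preceding this corollary as the inductive engine. The key observation is that the hypothesis $n_j\neq 2,3$ forces each summand $M_{n_j}(\mathbb C)$ to be either $M_1\cong\mathbb C$ or a block $M_{n_j}(\mathbb C)$ with $n_j\ge 4$. In both cases the diameter of the (possibly empty) nontrivial component is at most $3$: for $n_j\ge 4$ this is exactly the value recalled in R3, while $M_1$ has no nontrivial vertices at all, so its bound holds vacuously.

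For the base case $m=1$ the algebra is a single $M_{n_1}(\mathbb C)$, and the bound is immediate from the two cases above. For the inductive step I would split $A=B\oplus M_{n_m}(\mathbb C)$ with $B=M_{n_1}(\mathbb C)\oplus\cdots\oplus M_{n_{m-1}}(\mathbb C)$ and apply the Proposition to the pair $B$, $M_{n_m}(\mathbb C)$. This gives $\diam(A)\le\max\{\diam(B),\diam(M_{n_m}(\mathbb C)),3\}$; the inductive hypothesis yields $\diam(B)\le 3$, and $\diam(M_{n_m}(\mathbb C))\le 3$ as above, so the maximum equals $3$. Along the way the Proposition also propagates the fact that there is only a single nontrivial connected component, so this stronger statement comes for free.

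The one hypothesis of the Proposition that requires attention --- and this is the only real point to check --- is that neither $B$ nor $M_{n_m}(\mathbb C)$ is isomorphic to $M_2(\mathbb C)$. The block $M_{n_m}(\mathbb C)$ is excluded directly since $n_m\neq 2$. For $B$, note that $M_2(\mathbb C)$ is simple, so no direct sum of two or more nonzero matrix blocks can be isomorphic to it; and a single block $M_{n_1}(\mathbb C)$ coincides with $M_2(\mathbb C)$ only when $n_1=2$, which the hypothesis forbids. Hence $B\not\cong M_2(\mathbb C)$ and the Proposition applies. Finally, the remark following the Proposition ensures that an $M_1$ summand on either side causes no trouble, so the induction closes without additional case distinctions.
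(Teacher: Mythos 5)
Your proof is correct and takes essentially the same route as the paper: the paper's own (very terse) proof likewise combines the bound $\diam(M_{n_j}(\mathbb C))\le 3$ for $n_j\neq 2,3$ with the preceding Proposition on direct sums, leaving the iteration over the $m$ summands and the check that no intermediate algebra is $M_2(\mathbb C)$ implicit. Your version simply makes that induction and those hypothesis checks explicit, which is a faithful elaboration rather than a different argument.
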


\begin{proof}Indeed, by \cite[Proposition 2.7]{ArambasicBJMA2020} the diameter of all summands $M_{n_j}(\mathbb C)$ does not exceed $3$. The result follows by previous Propositon.
\end{proof}

\section{Questions}\label{questions}

At the end of this note we raise two questions for future investigation.

1. Determine the exact diameter of the orthograph of an arbitrary finite dimensional $C^*$-algebra. A specific difficulty of this question is the impossibility to reduce it from $A\oplus B$ to its summands -- see Remark \ref{ImpRed}.

2. It would be interesting to reformulate the conclusion of Proposition \ref{uslovB} in some other terms. More precisely, do the following conditions are mutually equivalent?
    \begin{enumerate}
        \item\label{B0} For any $p$, $q\in A''$ minimal, there is minimal $r$ such that, $pr=qr=0$, i.e.\ the conclusion of Proposition \ref{uslovB};
        \item\label{B1} Any two maximal modular left ideals in $A$ have non empty intersection;
        \item\label{B2} The orthogonal dimension of $A$ is at least $3$. (Here, the orthogonal dimension is the maximal cardinality of some family of projections $\{p_i\}_{i\in I}$ such that $p_ip_j=0$ for $i\neq j$);
        \item\label{B3} For any two pure states on $A$ there is another pure state $BJ$-orthogonal to them.
    \end{enumerate}

\begin{remark}
    It would be interesting also, to reformulated the previous conditions in more terms, more specific, in terms of irreducible representations.
\end{remark}

A partial answer can be given as follows:

\begin{proposition}
    We have (\ref{B1}) implies (\ref{B3}) and (\ref{B1}) is equivalent to (\ref{B0}).
\end{proposition}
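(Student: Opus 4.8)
The plan is to route everything through the three bijections already established in the paper: pure states on $A$ correspond to minimal projections of $A''$ (Lemma \ref{PhiMinBij}), and pure states correspond to maximal modular left ideals via the left kernel $L_\rho=\{a\in A:\rho(a^*a)=0\}$ (the left-handed form of \cite[Theorem 5.3.5]{Murphy}, used in the same way as in Proposition \ref{Esm2}). The key translation is that if $p\in A''$ is the minimal projection attached to the pure state $\rho$, then by (\ref{PhiMin}) we have $pa^*ap=\rho(a^*a)p=(ap)^*(ap)$, so $\rho(a^*a)=0$ is equivalent to $ap=0$. Hence
$$L_\rho=\{a\in A:ap=0\},$$
and condition (\ref{B1}) says exactly: for every pair of minimal projections $p,q\in A''$ there is a \emph{nonzero} $a\in A$ with $ap=aq=0$. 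Condition (\ref{B0}) asks instead for a minimal projection $r\in A''$ with $pr=qr=0$. Thus the whole proposition reduces to passing between an element of $A$ annihilating $p,q$ on the right and a minimal projection of $A''$ orthogonal to $p,q$; the two tools for this passage are the noncommutative Urysohn lemma (Theorem \ref{Urysohn}) in one direction and Lemma \ref{ProjLE} in the other.

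For (\ref{B0})$\Rightarrow$(\ref{B1}) I would start from minimal $p,q$ and the minimal $r$ granted by (\ref{B0}), so $pr=qr=0$. Then $r$ is compact, while $p\vee q$ is closed and $(p\vee q)r=0$, both by Lemma \ref{VEE}. Applying Theorem \ref{Urysohn} to the compact projection $r$ and the closed projection $p\vee q$ produces $a\in A$ with $0\le a\le1$, $ar=r$ and $a(p\vee q)=0$, in particular $ap=aq=0$. Since $ar=r\neq0$ we have $a\neq0$, so $a$ is a nonzero element of $L_\rho\cap L_\sigma$ and (\ref{B1}) holds. For the converse (\ref{B1})$\Rightarrow$(\ref{B0}) I would take minimal $p,q$ and, using (\ref{B1}), a nonzero $a\in A$ with $ap=aq=0$. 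Put $h=a^*a$, a nonzero positive element of $A$ satisfying $hp=a^*(ap)=0$ and likewise $hq=0$. Lemma \ref{ProjLE} gives a minimal projection $r\in A''$ with $hr=r$ and $r\le h/\|h\|$. Orthogonality to $p$ then follows by squeezing: since $php=p(hp)=0$, we get $0\le prp\le \tfrac{1}{\|h\|}php=0$, whence $\|rp\|^2=\|prp\|=0$ and $rp=0$; symmetrically $rq=0$. Thus $r$ witnesses (\ref{B0}), and the two implications give (\ref{B0})$\Leftrightarrow$(\ref{B1}).

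Finally, for (\ref{B1})$\Rightarrow$(\ref{B3}) I would use the equivalence just proved to obtain, for the minimal projections $p,q$ of two given pure states $\rho,\sigma$, a minimal projection $r$ with $pr=qr=0$; let $\tau$ be the pure state attached to $r$ by Lemma \ref{PhiMinBij}. Exactly as in the first implication, Theorem \ref{Urysohn} yields $a\in A$ with $0\le a\le1$, $ar=r$ and $ap=aq=0$. Reading these through (\ref{PhiMin}): from $ar=r$ (hence also $ra=r$ and $rar=r$) we get $\tau(a)=1$, while $pap=\rho(a)p=0$ gives $\rho(a)=0$ and similarly $\sigma(a)=0$. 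Consequently, for every $\lambda\in\mathbb C$,
$$\|\tau+\lambda\rho\|\ge|(\tau+\lambda\rho)(a)|=|\tau(a)|=1=\|\tau\|,$$
so $\tau\perp_{BJ}\rho$, and identically $\tau\perp_{BJ}\sigma$; this is (\ref{B3}).

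The step I expect to be most delicate is pinning down the left-ideal dictionary: verifying that $L_\rho=\{a:ap=0\}$ and, more importantly, that quantifying over all maximal modular left ideals is genuinely the same as quantifying over all pairs of minimal projections of $A''$ (i.e.\ that the left-kernel map is a bijection onto the maximal modular left ideals). Once that correspondence is secured, every remaining step is a routine application of Theorem \ref{Urysohn}, Lemma \ref{VEE} and Lemma \ref{ProjLE}. It is also worth noting that the argument gives only the implication (\ref{B1})$\Rightarrow$(\ref{B3}), not the reverse, because recovering a common nonzero element of two left kernels from mere functional $BJ$-orthogonality of the witnessing pure state is not provided by this circle of ideas.
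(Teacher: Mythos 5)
Your proof is correct, but it takes a genuinely different route from the paper's. For the equivalence (\ref{B1})$\Leftrightarrow$(\ref{B0}), the paper only records the translation coming from Lemma \ref{PhiMinBij}: for a minimal projection $r$, the relations $pr=qr=0$ amount to $\varphi_p(r)=\varphi_q(r)=0$, i.e.\ $r\in\ker\varphi_p\cap\ker\varphi_q$; it never explicitly crosses the bridge between projections in $A''$ and nonzero elements of $A$ lying in the two maximal modular left ideals. That bridge is exactly what you build: Theorem \ref{Urysohn} applied to the compact $r$ and the closed $p\vee q$ (via Lemma \ref{VEE}) gives (\ref{B0})$\Rightarrow$(\ref{B1}), while Lemma \ref{ProjLE} applied to $h=a^*a$ together with the squeeze $0\le prp\le\|h\|^{-1}php=0$ gives (\ref{B1})$\Rightarrow$(\ref{B0}); your dictionary $L_\rho=\{a\in A: ap=0\}$ (the computation via (\ref{PhiMin}) and the $C^*$-identity is right) rests on the bijection between pure states and maximal modular left ideals, which is precisely \cite[Theorem 5.3.5]{Murphy}, a result the paper itself already invokes in Proposition \ref{Esm2}, so that reliance is legitimate. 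For (\ref{B1})$\Rightarrow$(\ref{B3}) the two arguments diverge more sharply: the paper stays inside $A$, picking a positive $c$ in the intersection of the two left kernels, taking a pure state of that subalgebra norming $c$, and extending it to a pure state of $A$ by Hahn--Banach plus a Krein--Milman extreme-point argument, with the final $BJ$-orthogonality check left implicit; you instead route through (\ref{B0}) and Urysohn to manufacture $a\in A$ with $\tau(a)=1$, $\rho(a)=\sigma(a)=0$, $0\le a\le 1$, and then verify $\|\tau+\lambda\rho\|\ge|(\tau+\lambda\rho)(a)|=1=\|\tau\|$ explicitly. The paper's extension argument is softer and independent of the $A''$ machinery for that implication; yours is more constructive, makes every orthogonality visible through a concrete witness (including the fact that $\tau$ really is \emph{another} pure state, since $\tau(a)=1\neq0=\rho(a)$), and in the process supplies a complete two-sided proof of (\ref{B1})$\Leftrightarrow$(\ref{B0}) where the paper's is essentially a sketch.
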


\begin{proof}
    (\ref{B1}) $\Rightarrow$ (\ref{B3}).

    Let $\varphi$ and $\psi$ be pure states on $A$, and let $N_\varphi$ and $N_\psi$ be their left ideals, defined by
    $$N_\varphi=\{a\in A\mid\varphi(ab)=0,\mbox{ for all }b\in A\},$$
    and similarly for $N_\psi$. By (\ref{B1}), the intersection $C=N_\varphi\cap N_\psi$ is not empty. $C$ is ideal, hence algebra. Pick some $c>0$, $c\in C$. There is a pure state $\tau$ on $C$, such that $\tau(c)=1$.

    It is enough to prove that $\tau$ can be extended to whole $A$. (We can assume that $A$ has a unit). There is a positive extension of $\tau$ of norm $1$ by Hahn-Banach theorem. Such an extension is a state, and we can prove that it is a pure state as well, by standard procedure. Namely, the set $E$ of all these extensions is closed (hence compact) and convex in the set of all states on $A$. By Krein-Millman theorem there is an extreme points in $E$, say $\tau'$. Suppose $\tau'$ is not pure on $A$. That means that $\tau'$ is not an extreme point in the set of all states (not only extending $\tau$) on $A$. Therefore $\tau'=\theta\sigma_1+(1-\theta)\sigma_2$ for some other states $\sigma_1$, $\sigma_2$, and also
    $$\tau=\tau'|_C=\theta\sigma_1|_C+(1-\theta)\sigma_2|_C.$$

    The state $\tau$ is pure on $C$, so $\sigma_1|C=\sigma_2|_C=\tau$, and hence $\sigma_1$, $\sigma_2\in E$. Finally, $\tau'$ is extreme point in $E$, and hence $\sigma_1=\sigma_2=\tau$.

    (\ref{B1}) $\Leftrightarrow$ (\ref{B0}).

    From Lemma \ref{PhiMinBij}, we know that for $p,q\in A''$ minimal projections, there exist pure states $\varphi_p$ and $\varphi_q$ such that $\varphi_p(p)=\varphi_q(q)=1$ and $pap=\varphi_p(a)p$ and $qaq=\varphi_q(a)q$ for every $a\in A$. Note that $pr=qr=0$ is equivalent to $prp=qrq=0$, so that it must be $\varphi_p(r)=\varphi_q(r)=0$. Thus, $pr=qr=0$ which is equivalent to $r\in Ker(\varphi_p)\cap Ker(\varphi_q)$.
\end{proof}

\bibliographystyle{abbrv}
\bibliography{Orthograph2023Feb}

\end{document}